\definecolor{webgreen}{rgb}{0,.5,0}
\definecolor{webbrown}{rgb}{.8,0,0}
\definecolor{emphcolor}{rgb}{0.95,0.95,0.95}
\ifpdf \hypersetup{pdftex,
            pdfstartview=FitH, 
            bookmarksopen=true,
            bookmarksnumbered=true
} \else \hypersetup{dvips} \fi
\numberwithin{equation}{section}
\newtheorem{proposition}{Proposition}[section]
\newtheorem{remark}{Remark}[section]
\newtheorem{lemma}{Lemma}[section]
\newtheorem{example}{Example}[section]
\numberwithin{remark}{section} \numberwithin{proposition}{section}
\numberwithin{corollary}{section}
\newcommand{\lapinv}{\Phi(q)}
\newcommand {\R}{\mathbb{R}}
\newcommand {\p}{\mathbb{P}}
\newcommand {\E}{\mathbb{E}}
\newcommand{\diff}{{\rm d}}
\newcommand{\lev}{L\'{e}vy }
\title[Phase-type fitting of scale functions for spectrally negative L\'{E}VY processes]{Phase-type fitting of scale functions for spectrally negative L\'{E}VY processes}
\thanks{Date: \today.  An earlier version of this paper was circulated as
``On Scale Functions of Spectrally Negative Levy Processes with Phase-Type Jumps".  M.\ Egami is in part supported
by Grant-in-Aid for Scientific Research (B) No.\ 23330104 and (B) No.\ 22330098. K.\
Yamazaki is in part supported by Grant-in-Aid for Young Scientists
(B) No.\ 22710143, the Ministry of Education, Culture, Sports,
Science and Technology, and by Grant-in-Aid for Scientific Research (B) No.\  2271014, Japan Society for the Promotion of Science. The authors thank the anonymous referees for their thorough reviews and insightful comments that help improve the presentation of this paper.}
\author[M. Egami]{\,Masahiko Egami$^\dag$\,}\thanks{$^\dag$\,Graduate School of Economics,
Kyoto University, Sakyo-Ku, Kyoto, 606-8501, Japan }
\author[K. Yamazaki]{\,Kazutoshi Yamazaki$^\ddag$}\thanks{$\ddag$\, (corresponding author) Department of Mathematics,
Faculty of Engineering Science, Kansai University, Suita-shi, Osaka 564-8680, Japan. Email: \mbox{{\em
kyamazak@kansai-u.ac.jp}}.  Tel: +81-6-6368-1527.}
\thanks{}
\begin{document}

\begin{abstract}
\noindent We study the scale function of the spectrally
negative phase-type \lev process.   Its scale function admits an analytical expression and so do a number of its fluctuation identities.  Motivated by the fact that the class of phase-type
distributions is dense in the class of all positive-valued
distributions, we propose a new approach to approximating the scale
function and the associated fluctuation identities for a general spectrally negative \lev process.  Numerical examples are provided to illustrate the effectiveness of the approximation method.  

\end{abstract}


\maketitle \noindent \small{\textbf{Key words:} 
phase-type models; spectrally negative \lev processes; scale functions \\
\noindent Mathematics Subject Classification (2010) :    60G51, 60J75, 65C50 }\\

\section{Introduction} \label{section_introduction}

In the last decade, significant progress has been made regarding
spectrally negative \lev processes and their scale functions.  As can be seen in the work of, for example,
\cite{Bertoin_1996, Doney_2007, Kyprianou_2006},
a number of fluctuation identities concerning spectrally negative \lev
processes can be expressed in terms of scale functions. There are
numerous applications in applied probability including optimal
stopping, queuing, mathematical finance, insurance and credit risk.
Despite these advances, a
major obstacle still remains in putting these in practice because scale functions are in general known
only up to their Laplace transforms, and only a few cases
admit explicit expressions.  
Typically, one needs to rely on numerical Laplace inversion in order to approximate the scale function; see \cite{Kuznetsov_2011, Surya_2008}.

In this paper, we propose a \emph{phase-type (PH)-fitting} approach by using the scale function for the class of spectrally negative PH \lev processes, or \lev processes with negative PH-distributed jumps. Consider a continuous-time Markov chain with some initial distribution and state space consisting of a single absorbing state and a finite number of transient states.  The PH-distribution is the distribution of the time to absorption.
The class of PH-distributions includes, for example, the exponential, hyperexponential,  Erlang, hyper-Erlang and Coxian distributions; see, for example,\ Section 3 of \cite{Asmussen_2003}.

The scale function of the spectrally negative PH \lev process can be obtained by analytically applying Laplace inversion; more generally, whenever the Laplace exponent is a rational function, the scale function can be computed directly by partial fraction decomposition \cite{Kuznetsov_2011}.   It is known that the class of PH-distributions is dense in the class of all positive-valued distributions. Consequently, at least under a suitable condition, the scale function of any spectrally negative \lev process can be approximated by those of PH \lev processes.  



One major advantage of this approach is that the approximated scale function is given as a function in a closed form (usually a sum of complex exponentials), which enables one to analytically obtain other fluctuation identities explicitly.  This is particularly important when one needs to integrate some functional with respect to the resolvent measure, which is known to be written in terms of the scale function. This operation is needed, for example, when  overshoots/undershoots at down/up-crossing times are computed and when Carr's randomization (Canadization) is used to approximate the fluctuation identities killed at a deterministic time.


Another advantage of the PH-fitting approach is that the Laplace transform of the PH-distribution has an explicit expression. The numerical Laplace inversion approach inverts the equality written in terms of the Laplace exponents which do not in general admit analytical expressions (see \eqref{laplace_spectrally_negative}-\eqref{eq:scale} below).   For example, for the cases considered in Section \ref{numerics_EM} below, the associated Laplace exponents are not expressible in analytical form.  In other words, it contains two types of errors: (1) the approximation error caused while computing the Laplace exponent and (2) the error caused while inverting the Laplace transform.  On the other hand, the PH-fitting approach only contains the PH-fitting error thanks to the closed-form Laplace transform of the PH-distribution.

In order to evaluate the efficiency of the PH-fitting approach of approximating scale functions, we conduct a series of numerical experiments using the EM-algorithm of \cite{Asmussen_1996}.  
Using various fluctuation identities that can be computed analytically by the scale function, we compare their values of the fitted PH \lev processes with the simulated results.
While a number of hyperexponential fitting algorithms as in \cite{Feldmann_1998} for a distribution with a completely monotone density are guaranteed to converge to the desired limits, fitting of a general distribution is known to be difficult.  Nevertheless, at least for the case of finite \lev measures, our results show that the PH-fitting of scale functions are accurate even with a moderate number of phases and even for the case of uniformly-distributed jumps that are known to be difficult to fit. 


The rest of the paper is organized as follows.  Section \ref{section_phase_type_process} reviews the scale function and studies the scale function for the case the Laplace exponent is a rational function.  Section \ref{scale_phase} reviews the spectrally negative PH \lev process and studies its scale function.
We show via the continuity theorem that the PH-fitting approach can approximate the scale function for a general spectrally negative \lev process, and also discuss its practicability and limitations.
  In Section \ref{section_numerical_results}, we give the performance of the PH-fitting approach through a series of numerical experiments.

\section{Scale Functions}
 \label{section_phase_type_process}

Let $(\Omega,\mathcal{F},\mathbb{P})$ be a probability space hosting a \emph{spectrally negative} \lev process $X = \left\{X_t; t \geq 0 \right\}$, $\mathbb{P}^x$ the conditional probability under which $X_0 = x$ (and also $\mathbb{P} \equiv \mathbb{P}^0$), and $\mathbb{F} := \left\{ \mathcal{F}_t: t \geq 0 \right\}$ the filtration generated by $X$.  The process $X$ is uniquely characterized by its \emph{Laplace exponent}
\begin{align}
\psi(s)  := \log \E \left[ e^{s X_1} \right] =  c s +\frac{1}{2}\sigma^2 s^2 + \int_{(-\infty,0)} (e^{s z}-1 - s z 1_{\{z > -1\}}) \Pi (\diff z),  \label{laplace_spectrally_negative}
\end{align}
for any $s \geq 0$,
where $\Pi$ is a \lev measure with the support $(-\infty,0)$ and satisfies the integrability condition $\int_{(-\infty,0)} (1 \wedge |z|^2) \Pi(\diff z) < \infty$.  It has paths of bounded variation if and only if
\begin{align*}
\sigma = 0 \quad \textrm{and} \quad \int_{(-\infty,0)} (1 \wedge |z|) \Pi(\diff z) < \infty;
\end{align*}
see, for example, Lemma 2.12 of \cite{Kyprianou_2006}. In this case, we can rewrite the Laplace exponent (\ref{laplace_spectrally_negative}) by
\begin{align*}
\psi(s) = \mu s + \int_{(-\infty,0)} (e^{sz} - 1) \Pi (\diff z), 
\end{align*}
with
\begin{align*}
\mu := c - \int_{(-1,0)} z \Pi (\diff z).
\end{align*}
Here, we disregard the case when $X$ is a negative of a subordinator (or decreasing a.s.).

Fix $q \geq 0$ and any spectrally negative \lev process with its Laplace exponent $\psi$. The scale function $W^{(q)}: \R \rightarrow [0,\infty)$ is a function whose Laplace transform is given by
\begin{align}\label{eq:scale}
\int_0^\infty e^{-s x} W^{(q)}(x) \diff x = \frac 1
{\psi(s)-q}, \qquad s > \Phi(q)
\end{align}
where
\begin{align}
\Phi(q) :=\sup\{s  \geq 0: \psi(s)=q\}, \quad
q\ge 0. \label{zeta}
\end{align}
We assume $W^{(q)}(x)=0$ on $(-\infty,0)$.

Let us define the \emph{first down-} and \emph{up-crossing times}, respectively, by
\begin{align*}
\tau_a^- := \inf \left\{ t \geq 0: X_t < a \right\} \quad \textrm{and} \quad \tau_b^+ := \inf \left\{ t \geq 0: X_t >  b \right\}, \quad a,b \in \R,
\end{align*}
with $\inf \emptyset = \infty$.
Then we have for any $0 < x < b$
\begin{align}
\begin{split}
\E^x \left[ e^{-q \tau_b^+} 1_{\left\{ \tau_b^+ < \tau_0^-, \,  \tau_b^+ < \infty \right\}}\right] &= \frac {W^{(q)}(x)}  {W^{(q)}(b)}, \\
\E^x \left[ e^{-q \tau_0^-} 1_{\left\{ \tau_b^+ > \tau_0^-, \, \tau_0^- < \infty \right\}}\right] &= Z^{(q)}(x) -  Z^{(q)}(b) \frac {W^{(q)}(x)}  {W^{(q)}(b)} \end{split} \label{laplace_in_terms_of_z}
\end{align}
where
\begin{align*}
Z^{(q)} (x) := 1 + q  \int_0^x W^{(q)} (y) \diff y, \quad x \in \R. 
\end{align*}

Fix $a \geq 0$ and define $\psi_a(\cdot)$ as the Laplace exponent of $X$ under $\p_a$ with the change of measure 
\begin{align*}
\left. \frac {\diff \p_a} {\diff \p}\right|_{\mathcal{F}_t} = \exp(a X_t - \psi(a) t), \quad t \geq 0; 
\end{align*}
see page 213 of \cite{Kyprianou_2006}.
 Suppose $W_a^{(q)}$ and $Z_a^{(q)}$ are the scale functions associated with $X$ under $\p_a$ (or equivalently with $\psi_a(\cdot)$).  
Then, by Lemma 8.4 of \cite{Kyprianou_2006}, $W_a^{(q-\psi(a))}(x) = e^{-a x} W^{(q)}(x)$, $x \in \R$,
which is well-defined even for $q \leq \psi(a)$ by Lemmas 8.3 and 8.5 of \cite{Kyprianou_2006}.  In particular, we define
\begin{align*}
W_{\Phi(q)}(x) := W_{\Phi(q)}^{(0)}(x) = e^{-\Phi(q) x} W^{(q)}(x), \quad x \in \R,
\end{align*}
which is known to be monotonically increasing and
\begin{align*}
W_{\Phi(q)} (x) \nearrow  {(\psi'(\Phi(q)))^{-1}} \quad \textrm{as} \; x \rightarrow \infty,
\end{align*}
except for the case $q=0$ and $\psi'(\Phi(0)+) = 0$.
This also implies that the scale function $W^{(q)}$ increases exponentially in $x$.


Regarding the smoothness of the scale function, if the \lev measure has no atoms or the process has paths of unbounded variation, then $W^{(q)} \in C^1(0,\infty)$. If it has a Gaussian component ($\sigma > 0$), then $W^{(q)} \in C^2(0,\infty)$; see \cite{Chan_2009}.  In particular, a stronger result holds for the completely monotone jump case.  Recall that a density function $f$ is called \emph{completely monotone} if all the derivatives exist and, for every $n \geq 1$,
\begin{align*}
(-1)^n f^{(n)} (x) \geq 0, \quad x \geq 0,
\end{align*}
where $f^{(n)}$ denotes the $n^{th}$ derivative of $f$.

\begin{lemma}[Loeffen \cite{Loeffen_2008}] \label{lemma_completely_monotone}
If the (dual of the) \lev measure has a completely monotone density, then
$W_{\Phi(q)}'$ is also completely monotone.
\end{lemma}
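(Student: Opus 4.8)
The plan is to reduce to the case $q=0$ via an Esscher transform and then read off the complete monotonicity of $W_{\zeta_q}'$ from the Wiener--Hopf factorisation of $X$ together with classical properties of complete Bernstein and Stieltjes functions. For the reduction, note first from \eqref{scale_version} that $W_{\zeta_q}$ is exactly the $0$-scale function of the spectrally negative \lev process with Laplace exponent $\psi_{\zeta_q}(s):=\psi(s+\zeta_q)-q$, which by \eqref{change_of_measure} is $X$ under the Esscher measure $\p_{\zeta_q}$. Under $\p_{\zeta_q}$ the \lev measure becomes $e^{\zeta_q z}\,\Pi(\diff z)$, so its density on $(0,\infty)$ is multiplied by the completely monotone function $x\mapsto e^{-\zeta_q x}$; since a product of completely monotone functions is completely monotone, the hypothesis is preserved. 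Moreover $\psi_{\zeta_q}'(0+)=\psi'(\zeta_q)\ge 0$ because $\zeta_q$ is the largest root of $\psi=q$ (see \eqref{zeta}), so $0$ is the largest root of $\psi_{\zeta_q}=0$. It therefore suffices to prove the statement for a spectrally negative \lev process $Y$, with completely monotone \lev density $\pi_Y$, whose Laplace exponent $\psi_Y$ has $0$ as its largest root.

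The heart of the argument is to show that $s\mapsto\psi_Y(s)/s$ is a complete Bernstein function. Since $Y$ has no positive jumps, its ascending ladder height process is a pure drift, and since $0$ is the largest root of $\psi_Y$ the Wiener--Hopf factorisation identifies $s\mapsto\psi_Y(s)/s$, up to a positive constant, with the Laplace exponent $\hat\kappa$ of the descending ladder height subordinator $\hat H$ of $Y$, whose \lev measure is, up to the same constant, the one with density $x\mapsto\overline{\Pi}_Y(x):=\int_x^\infty\pi_Y(u)\,\diff u$ on $(0,\infty)$ (see \cite{Kyprianou_2006}). Writing $\pi_Y(u)=\int_{(0,\infty)}e^{-ru}\,\rho(\diff r)$ by Bernstein's theorem, we obtain $\overline{\Pi}_Y(x)=\int_{(0,\infty)}r^{-1}e^{-rx}\,\rho(\diff r)$, which is again completely monotone. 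Hence $\hat H$ is a subordinator with a completely monotone \lev density, so $\hat\kappa$, and therefore $\psi_Y(s)/s$, is a complete Bernstein function.

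To conclude, recall that a nonzero complete Bernstein function has a Stieltjes reciprocal, so $s/\psi_Y(s)$ is a Stieltjes function. The $0$-scale function $W^{(0)}_Y$ is $C^1$ on $(0,\infty)$ (since $\pi_Y$ has no atoms), it does not grow exponentially (because $0$ is the largest root of $\psi_Y$), and $W^{(0)}_Y(0+)<\infty$; so the usual integration by parts in \eqref{eq:scale} gives $\int_0^\infty e^{-sx}(W^{(0)}_Y)'(x)\,\diff x=s/\psi_Y(s)-W^{(0)}_Y(0+)$, an expression tending to $0$ as $s\to\infty$. Therefore this Laplace transform is a Stieltjes function with vanishing constant term, i.e.\ of the form $a/s+\int_{(0,\infty)}(s+t)^{-1}\,\mu(\diff t)$ for some $a\ge 0$ and some positive measure $\mu$; inverting, $(W^{(0)}_Y)'(x)=a+\int_{(0,\infty)}e^{-tx}\,\mu(\diff t)$ for $x>0$, which is completely monotone. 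Combined with the reduction of the first paragraph, this proves the lemma.

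I expect the main obstacle to be the step identifying the \lev measure of the descending ladder height subordinator as the integrated tail of $\Pi_Y$ and deducing that $\psi_Y(s)/s$ is a complete Bernstein function: this is exactly where spectral negativity is indispensable (it forces the ascending ladder to be a pure drift, making the descending Wiener--Hopf factor explicit) and where one uses that complete monotonicity is preserved under the integrated-tail map. A careful write-up must also dispose of a couple of degenerate sub-cases — a compound-Poisson-type $\hat H$ (equivalently $W^{(0)}_Y(0+)>0$) and the critical case $\psi'(\zeta_q)=0$ (where $W^{(0)}_Y$ is unbounded) — but these affect only the bookkeeping, not the complete monotonicity. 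Alternatively, one may try to verify the complete Bernstein property of $\psi_Y(s)/s$ directly from \eqref{laplace_spectrally_negative} by Fubini after inserting the Bernstein representation of $\pi_Y$, though that route demands a more delicate treatment of the compensator in \eqref{laplace_spectrally_negative}.
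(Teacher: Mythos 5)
The paper does not prove this lemma at all—it simply cites Loeffen \cite{Loeffen_2008}—and your argument is a correct reconstruction of essentially that reference's proof: reduce to $q=0$ by the Esscher tilt (which preserves complete monotonicity of the jump density and makes $0$ the largest root), identify $\psi_Y(s)/s$ via the descending ladder height exponent as a complete Bernstein function because the integrated tail $\overline{\Pi}_Y$ is completely monotone, and then read off complete monotonicity of $W'$ from the Stieltjes representation of $s/\psi_Y(s)$ after removing the constant term $W_Y(0+)=\lim_{s\to\infty}s/\psi_Y(s)$. The steps, including the bounded-variation and critical-case bookkeeping you flag, are sound, so there is nothing to add beyond noting that your route coincides with the cited source rather than with anything proved in this paper.
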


Finally, the behavior in the neighborhood of zero is given as follows.
\begin{lemma} \label{lemma_zero}
For every $q \geq 0$, we have
\begin{align*}
W^{(q)} (0) &= \left\{ \begin{array}{ll} 0, & \textrm{if $X$ is of unbounded variation} \\ \frac 1 {\mu}, & \textrm{if $X$ is of bounded variation} \end{array} \right\}, \\
W^{(q)'} (0+) &= \left\{ \begin{array}{ll}  \frac 2 {\sigma^2}, & \textrm{if }\sigma > 0 \\   \infty, &  \textrm{if } \sigma = 0 \; \textrm{and} \; \Pi(-\infty,0) = \infty \\ \frac {q + \Pi(-\infty,0)} {\mu^2}, & \textrm{if $X$ is compound Poisson} \end{array} \right\}.
\end{align*}
\end{lemma}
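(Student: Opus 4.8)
The plan is to read both quantities off the Laplace-transform identity \eqref{eq:scale} via the classical initial-value (Tauberian) theorems for Laplace transforms, so that everything reduces to the growth of $\psi(s)$ as $s\to\infty$. For $W^{(q)}(0)$: since $W^{(q)}(x)=e^{\zeta_q x}W_{\zeta_q}(x)$ with $W_{\zeta_q}$ non-decreasing (as recorded in \eqref{scale_function_asymptotic2}), the map $W^{(q)}$ is non-decreasing on $[0,\infty)$ and right-continuous at $0$, hence the initial-value theorem gives $W^{(q)}(0)=\lim_{s\to\infty}s/(\psi(s)-q)$ and it suffices to identify $\lim_{s\to\infty}\psi(s)/s$. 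Splitting the \lev integral in \eqref{laplace_spectrally_negative} at $z=-1$: if $\sigma>0$ the quadratic term gives $\psi(s)/s\to\infty$; if $\sigma=0$ and $\int_{(-\infty,0)}(1\wedge|z|)\Pi(\diff z)=\infty$ one still has $\psi(s)/s\to\infty$, using the elementary bound $e^{u}-1-u\ge c\,(u^{2}\wedge|u|)$ for $u\le 0$ to get $\int_{-1}^{0}(e^{sz}-1-sz)\,\Pi(\diff z)\ge c\,s\int_{-1}^{-1/s}|z|\,\Pi(\diff z)$, whose $s$-quotient $\int_{-1}^{-1/s}|z|\,\Pi(\diff z)$ diverges. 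Either way $W^{(q)}(0)=0$ in the unbounded-variation case. In the bounded-variation case, from $\psi(s)=\mu s+\int_{-\infty}^{0}(e^{sz}-1)\,\Pi(\diff z)$, splitting at an arbitrary $\delta<0$, using $|e^{sz}-1|\le s|z|$ on $(\delta,0)$ and $\int(1\wedge|z|)\,\Pi(\diff z)<\infty$, one gets $\psi(s)/s\to\mu$, i.e.\ $W^{(q)}(0)=1/\mu$.

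For the derivative I would invoke the second-order initial-value theorem $W^{(q)'}(0+)=\lim_{s\to\infty}s\big(s/(\psi(s)-q)-W^{(q)}(0)\big)$, legitimate by the regularity quoted just before the lemma ($W^{(q)}\in C^{1}(0,\infty)$ when the jump law has no atoms, and in the compound-Poisson case the one-sided limit $W^{(q)'}(0+)$ exists). If $\sigma>0$, then $\psi(s)=\tfrac12\sigma^{2}s^{2}+O(s)$ and $W^{(q)}(0)=0$, so the limit equals $\lim_{s\to\infty}s^{2}/(\psi(s)-q)=2/\sigma^{2}$. If $\sigma=0$ and $\Pi(-\infty,0)=\infty$, then monotone/dominated convergence on the \lev integral yields either $\psi(s)/s\to\mu$ with $\mu s-\psi(s)\to\infty$ (bounded variation, $W^{(q)}(0)=1/\mu$) or $\psi(s)/s\to\infty$ with $\psi(s)=o(s^{2})$ (unbounded variation, $W^{(q)}(0)=0$), and in either sub-case the displayed ratio tends to $+\infty$. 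If $X$ is compound Poisson write $\psi(s)=\mu s-\Pi(-\infty,0)+\int_{-\infty}^{0}e^{sz}\,\Pi(\diff z)$; since $\Pi$ is finite, dominated convergence gives $\int_{-\infty}^{0}e^{sz}\,\Pi(\diff z)\to 0$, so that, with $W^{(q)}(0)=1/\mu$ and $\psi(s)-q\sim\mu s$,
\begin{align*}
s\Big(\frac{s}{\psi(s)-q}-\frac1\mu\Big)=\frac{s\big(q+\Pi(-\infty,0)-\int_{-\infty}^{0}e^{sz}\,\Pi(\diff z)\big)}{\mu\,(\psi(s)-q)}\;\longrightarrow\;\frac{q+\Pi(-\infty,0)}{\mu^{2}}.
\end{align*}

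The asymptotics of $\psi$ above are routine; the one genuinely delicate point is the justification of the Tauberian steps — in particular that $\lim_{s\to\infty}s\big(s\int_{0}^{\infty}e^{-sx}W^{(q)}(x)\,\diff x-W^{(q)}(0)\big)$ really equals $W^{(q)'}(0+)$ — which needs precisely the one-sided regularity of $W^{(q)}$ at the origin borrowed from \cite{Chan_2009} and \cite{Kyprianou_Surya_2007}. If one prefers to avoid the $q$-dependence, an alternative is to use \eqref{w_phi} together with the Esscher transform \eqref{change_of_measure} — under which the bounded-variation drift $\mu$ is unchanged — to reduce everything to the case $q=0$ for $W_{\zeta_q}$ under $\mathbb{P}_{\zeta_q}$, and then recover the stated formulas from $W^{(q)}(0)=W_{\zeta_q}(0)$ and $W^{(q)'}(0+)=\zeta_q W_{\zeta_q}(0)+W_{\zeta_q}'(0+)$.
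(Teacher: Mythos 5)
The paper does not actually prove this lemma: it is quoted verbatim from Lemmas 4.3--4.4 of \cite{Kyprianou_Surya_2007}, so there is no internal proof to compare against, and your Laplace-transform/initial-value route is precisely the standard argument behind that citation. Your computations of the transform asymptotics are correct: $\psi(s)/s\to\mu$ (bounded variation), $\psi(s)/s\to\infty$ (unbounded variation, via the bound $e^{u}-1-u\ge c(u^{2}\wedge|u|)$ for $u\le 0$), $\psi(s)/s^{2}\to\sigma^{2}/2$, and the compound-Poisson limit $s\bigl(s/(\psi(s)-q)-1/\mu\bigr)\to(q+\Pi(-\infty,0))/\mu^{2}$ all check out; together with the Abelian initial-value theorem for the monotone function $W^{(q)}$ this settles $W^{(q)}(0)$ completely. (A small slip: when $\sigma>0$ the jump part need not be $O(s)$ --- infinite-variation jumps can contribute terms of order $s^{\alpha}$, $\alpha\in(1,2)$ --- but it is $o(s^{2})$, which is all you use.)

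The genuine gap is in the derivative step. The relation $W^{(q)'}(0+)=\lim_{s\to\infty}s\bigl(s/(\psi(s)-q)-W^{(q)}(0)\bigr)$ is, after integration by parts, an \emph{Abelian} statement: it converts an already-existing limit $\lim_{x\downarrow 0}W^{(q)'}(x)\in[0,\infty]$ into the transform limit, not the other way around. You justify existence by ``the regularity quoted before the lemma,'' but $W^{(q)}\in C^{1}(0,\infty)$ (or $C^{2}$ when $\sigma>0$) does not by itself imply that $W^{(q)'}$ has a limit at $0+$, and your parenthetical claim that the one-sided limit exists in the compound-Poisson case is exactly the unproved point; borrowing it from \cite{Kyprianou_Surya_2007} is circular here, since that is the very source the paper cites for the whole lemma. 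To close the gap without that borrowing you need a Tauberian input: for instance, apply Karamata's Tauberian theorem to the monotone function $x\mapsto W^{(q)}(x)-W^{(q)}(0)$, whose Laplace--Stieltjes transform is $s/(\psi(s)-q)-W^{(q)}(0)$; your asymptotics show this behaves like $L/s$ with $L=2/\sigma^{2}$ or $(q+\Pi(-\infty,0))/\mu^{2}$, whence $W^{(q)}(x)-W^{(q)}(0)\sim Lx$ as $x\downarrow 0$, i.e.\ the right derivative at $0$ in the difference-quotient sense, with a separate (one-sided, slightly more careful) argument for the case where the limit is $+\infty$; identifying this with $\lim_{x\downarrow0}W^{(q)'}(x)$ then still requires the regularity results of \cite{Chan_2009} or a renewal-equation argument in the bounded-variation case. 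Your closing Esscher remark is consistent (using $\psi(\zeta_q)=q$ one recovers $(q+\Pi(-\infty,0))/\mu^{2}$ from the measure-changed jump mass), but it is only a sketch and inherits the same existence issue at $q=0$.
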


\subsection{The case $\psi$ is rational}  As is discussed in \cite{Kuznetsov_2011}, when the Laplace exponent $\psi$ (extended to $\mathbb{C}$) is a rational function, or equivalently $X$ has jumps of rational transforms, we can invert \eqref{eq:scale} directly by partial fraction decomposition to obtain the scale function.  This class of processes is slightly more general than that of PH \lev processes we shall describe in the next section.  

Suppose $q \geq 0$ and $\psi'(0+) < 0$ if $q=0$.  Because $\psi(s) \xrightarrow{s \uparrow \infty} \infty$, this means that $(\psi(s)-q)^{-1}$ is a proper rational function that admits a partial fraction decomposition.  By  \eqref{zeta}, we can write
\begin{align}
\frac 1 {\psi(s)-q} = \frac {Q(s)} {(s-\Phi(q)) \prod_{i \in \mathcal{I}_q} (s+\xi_{i,q})}. \label{rational_proper}
\end{align}
Here $\mathcal{I}_q$ is the set of (the sign-changed) \emph{negative roots}:
\begin{align}
\mathcal{I}_q &:= \left\{ i: \psi (-\xi_{i,q}) = q \; \textrm{and} \; \mathcal{R} (\xi_{i,q}) > 0\right\}. \label{def_I_q}
\end{align}
The elements in $\mathcal{I}_q$ may not be
distinct; in this case, we take each as many times as its
multiplicity (see also Remark \ref{remark_multiplicity_unlikely} below). In addition, $Q$ is a polynomial that satisfies $Q(0) = \Phi(q) \prod_{i \in \mathcal{I}_q} \xi_{i,q} /q$ because $\psi(0) = 0$.

Let
$n$ denote the number of different roots in $\mathcal{I}_q$ and
$m_i$ denote the multiplicity of a root $\xi_{i,q}$ for $i = 1,\ldots,n$. We summarize the results given in Section 5.4 of \cite{Kuznetsov_2011}.
\begin{proposition} \label{proposition_main}
Suppose $q \geq 0$ and $\psi'(0+) < 0$ if $q=0$, and $\psi$ is a rational function such that \eqref{rational_proper} holds. Then the scale function is written
\begin{align}
W^{(q)}(x) =  \frac {e^{\Phi(q) x}} {\psi'(\Phi(q))}  - \sum_{i = 1}^n \sum_{k=1}^{m_i} B_{i,q}^{(k)} \frac {x^{k-1}} {(k-1)!} e^{-\xi_{i,q} x}, \quad x \geq 0, \label{scale_function}
\end{align}
where
\begin{align*}
B_{i,q}^{(k)} &:= \left. \frac 1 {(m_i-k)!} \frac {\partial^{m_i-k}} {\partial s^{m_i-k}} \frac { (s+\xi_{i,q})^{m_i}} {q-\psi(s)} \right|_{s = -\xi_{i,q}}, \quad 1 \leq k \leq m_i \; \textrm{and} \; 1 \leq i \leq n.
\end{align*}
In particular, if all the roots in $\mathcal{I}_q$ are distinct, then
\begin{align}
W^{(q)}(x) =  \frac {e^{\Phi(q) x}} {\psi'(\Phi(q))}  - \sum_{i = 1}^n  B_{i,q} e^{-\xi_{i,q} x}, \quad x \geq 0, \label{scale_function_distinct}
\end{align}
where
\begin{align*}
B_{i,q} &:= \left. \frac { s+\xi_{i,q}} {q-\psi(s)} \right|_{s = -\xi_{i,q}} = - \frac 1 {\psi'(-\xi_{i,q})}.
\end{align*}
\end{proposition}

\begin{remark}  At $x=0$, we have
\begin{align*}
W^{(q)}(0) =  \frac 1 {\psi'(\Phi(q))} - \sum_{i = 1}^n B_{i,q},
\end{align*}
which, by Lemma \ref{lemma_zero}, vanishes when $X$ is of unbounded variation while it is $\mu^{-1}$ otherwise.
\end{remark}


%


\section{Scale functions for Spectrally negative Phase-type \lev processes}
\label{scale_phase}


Consider a continuous-time Markov chain $Y = \{ Y_t; t \geq 0 \}$ with a finite
state space $\{1,\ldots,m \} \cup \{ \Delta \}$ where $1,\ldots,m$
are transient and $\Delta$ is absorbing. Its initial distribution is given by
a simplex ${\bm \alpha}=[\alpha_1, \ldots, \alpha_m]$
such that $\alpha_i=\p \left\{ Y_0=i \right\}$ for every $i = 1,\ldots,m$.  The intensity matrix ${\bm Q}$ is partitioned into the $m$ transient
states and the absorbing state $\Delta$, and is given by
\begin{align*}
{\bm Q}  := \begin{bmatrix} {\bm T} & {\bm t} \\ {\bm 0} & 0 \end{bmatrix}.
\end{align*}
Here  ${\bm T}$  is an  $m \times m$-matrix called the PH-generator, and ${\bm t} = - {\bm T} {\bm 1}$ where  ${\bm 1} =
[1,\ldots,1]'$. A distribution is called PH with
representation $(m, {\bm \alpha}, {\bm T})$ if it is the
distribution of the absorption time to $\Delta$ in the Markov chain
described above. It is known that ${\bm T}$ is non-singular and thus
invertible; see \cite{Asmussen_1996}.  Its distribution and density functions
are given, respectively, by
\begin{align*}
F(z; \bm \alpha, \bm T) =  1-{\bm \alpha} e^{{\bm T} z} {\bm 1} \quad \textrm{and} \quad f(z; \bm \alpha, \bm T) = {\bm \alpha} e^{{\bm T} z} {\bm t}, \quad z > 0.
\end{align*}

Let $X = \left\{X_t; t \geq 0 \right\}$ be a spectrally negative \lev process of the form
\begin{equation}
  X_t  - X_0=\mu t+\sigma B_t - \sum_{n=1}^{N_t} Z_n, \quad 0\le t <\infty, \label{levy_canonical}
\end{equation}
for some $\mu \in \R$ and $\sigma \geq 0$.  Here $B=\{B_t; t\ge 0\}$ is a standard Brownian motion, $N=\{N_t; t\ge 0\}$ is a Poisson process with arrival rate $\lambda$, and  $Z = \left\{ Z_n; n = 1,2,\ldots \right\}$ is an i.i.d.\ sequence of PH-distributed random variables with representation $(m,{\bm \alpha},{\bm T})$. These processes are assumed mutually independent. Its Laplace exponent is then
\begin{align*}
 \psi(s)   = \mu s + \frac 1 2 \sigma^2 s^2 + \lambda \left( {\bm \alpha} (s {\bm I} - {\bm{T}})^{-1} {\bm t} -1 \right),
 \end{align*}
which is analytic for every $s \in \mathbb{C}$ except at the eigenvalues of ${\bm T}$.

We shall see that the scale function of this process is a special case of the ones given in Proposition  \ref{proposition_main}.  Note that the case  all the roots in $\mathcal{I}_q$ are distinct has been studied by \cite{Kyprianou_Palmowski_2007} when $q=0$ and $\psi'(0+) > 0$.  More specialized cases with hyperexponential and Erlang-type jumps are given in \cite{Avram_2004, Loeffen_2008}.

Disregarding the negative subordinator case, we consider the following two cases:
\begin{enumerate}
\item[] \textbf{Case 1:} when $\sigma > 0$,
\item[]  \textbf{Case 2:} when $\sigma = 0$ and $\mu > 0$ (i.e.\ $X$ is a compound Poisson process).
\end{enumerate}
Here, in \textbf{Case 2}, down-crossing of a threshold can occur only by jumps; see, for
example, Chapter III of \cite{Bertoin_1996}.  On the other
hand, in \textbf{Case 1}, down-crossing can occur also by \emph{creeping
downward} (by the diffusion components). 

Fix $q > 0$. Recall \eqref{def_I_q}, and further define  the set of (the sign-changed) \emph{negative poles}:
\begin{align*}
\mathcal{J}_q &:= \left\{ j: \frac q {q - \psi(-\eta_j)} = 0 \; \textrm{and} \; \mathcal{R} (\eta_j) > 0\right\}.
\end{align*}
As is the case for $\mathcal{I}_q$ , the elements in $\mathcal{J}_q$ may not be
distinct, and, in this case, we take each as many times as its
multiplicity. By Lemma 1 of
\cite{Asmussen_2004}, we have
\begin{align*}
|\mathcal{I}_q| = \left\{ \begin{array}{ll} |\mathcal{J}_q| + 1, & \textrm{for \textbf{Case 1}}, \\ |\mathcal{J}_q|, & \textrm{for \textbf{Case 2}}. \end{array} \right.
\end{align*}
In particular, if the representation is minimal (see \cite{Asmussen_2004}), we have $|\mathcal{J}_q| = m$.

Let $\textbf{e}_q$ be an independent exponential random variable with parameter $q$ and denote the \emph{running supremum} and \emph{infimum}, respectively, by
\begin{align*}
\overline{X}_t := \sup_{0 \leq s \leq t} X_s \quad \textrm{and} \quad \underline{X}_t := \inf_{0 \leq s \leq t} X_s, \quad t \geq 0.
\end{align*}
The \emph{Wiener-Hopf factorization} states that $q /{(q - \psi(s))} = \varphi_q^+ (s) \varphi_q^- (s)$ for every $s \in \mathbb{C}$ such that $\mathcal{R}(s) = 0$, with the
\emph{Wiener-Hopf factors}
\begin{align*}
\varphi_q^- (s) := \E \left[ \exp (s \underline{X}_{\textbf{e}_q}) \right] \quad \textrm{and} \quad \varphi_q^+ (s) := \E \left[ \exp (s \overline{X}_{\textbf{e}_q}) \right] 
\end{align*}
that are analytic for $s$ with $\mathcal{R}(s) > 0$ and $\mathcal{R}(s) < 0$, respectively.  For the case of spectrally negative \lev processes, $\varphi_q^+ (s) = \Phi(q)/(\Phi(q)-s)$; see page 213 of \cite{Kyprianou_2006}.  
Moreover, by Lemma 1 of \cite{Asmussen_2004}, we have, for every $s$ such that $\mathcal{R}(s) > 0$,
\begin{align*}
\varphi_q^- (s) = \frac {\prod_{j \in \mathcal{J}_q} (s+\eta_j)} {\prod_{j \in \mathcal{J}_q} \eta_j} \frac {\prod_{i \in \mathcal{I}_q} \xi_{i,q}} {\prod_{i \in \mathcal{I}_q} (s+\xi_{i,q})}. 
\end{align*}
Hence \eqref{rational_proper} holds by setting
\begin{align*}
Q(s) = \frac {\Phi(q)} q \frac {\prod_{j \in \mathcal{J}_q} (s+\eta_j) \prod_{i \in \mathcal{I}_q} \xi_{i,q}} {\prod_{j \in \mathcal{J}_q} \eta_j}. 
\end{align*}
Consequently, the scale function can be written as \eqref{scale_function} or \eqref{scale_function_distinct} in Proposition \ref{proposition_main}.


\begin{remark} \label{remark_multiplicity_unlikely}As is discussed in Section 5.4 of \cite{Kuznetsov_2011}, it is in fact highly unlikely that any root in $\mathcal{I}_q$ has multiplicity larger than one.  This implies that the scale function is most likely simplified to \eqref{scale_function_distinct}.  This fact is confirmed in Section \ref{section_numerical_results} where, in all the numerical examples considered, all the roots in $\mathcal{I}_q$ turn out to be distinct.
\end{remark}

\begin{example}[Hyperexponential Case] \label{example_hyperexponential}
As an important example where all the roots in $\mathcal{I}_q$ are distinct and real, we consider the case where $Z$ has a hyperexponential distribution with a density function
\begin{align*}
f (z)  = \sum_{j=1}^m p_j \eta_j e^{- \eta_j z}, \quad z > 0,
\end{align*}
for some $0 < \eta_1 < \cdots < \eta_m < \infty$ and $p_j > 0$ for $1 \leq j \leq m$ such that $p_1 + \cdots + p_m = 1$.  Its Laplace exponent (\ref{laplace_spectrally_negative})
is then
\begin{align*}
\psi(s) = \mu s + \frac 1 2 \sigma^2 s^2  - \lambda
\sum_{j=1}^m p_j \frac s {\eta_j + s}.
\end{align*}
Notice in this case that $-\eta_1$, \ldots, $-\eta_m$ are the poles of the Laplace exponent.  Furthermore, all the roots in $\mathcal{I}_q$ are distinct and real and satisfy the following interlacing condition for every $q > 0$:
\begin{enumerate}
\item
for \textbf{Case 1}, there are $m+1$ roots $-\xi_{1,q}, \ldots, -\xi_{m+1,q}$ such that
\begin{align}
0 < \xi_{1,q} < \eta_1 < \xi_{2,q} < \cdots < \eta_m < \xi_{m+1,q} < \infty; \label{interlacing1}
\end{align}
\item for \textbf{Case 2}, there are $m$ roots $-\xi_{1,q}, \ldots, -\xi_{m,q}$  such that
\begin{align}
0 < \xi_{1,q} < \eta_1 < \xi_{2,q} < \cdots <  \xi_{m,q}  < \eta_m < \infty. \label{interlacing2}
\end{align}
\end{enumerate}
Because all roots are real and distinct, the scale function can be written as \eqref{scale_function_distinct}.

The class of hyperexponential distributions is important as it is dense in the class of all positive-valued distributions with completely monotone densities.  We refer the reader to \cite{Albrecher_2010, Feldmann_1998, Kammler_1976} for approximation methods.
\end{example}

\begin{example}[Coxian Case] \label{example_coxian}
The Coxian distributionon is a special case of the PH-distribution  where its  PH-generator $\bm T$ has the form
\begin{align*}
\bm T = \left[ \begin{array}{ccccccc} -\eta_1 & p_1 \eta_1 & 0 & \cdots & 0 & 0 & 0 \\ 0 &-\eta_2 & p_2 \eta_2 & \cdots & 0 & 0 & 0 \\ 0 &0 & -\eta_3 & \cdots & 0 & 0 & 0 \\  \cdots  &\cdots & \cdots & \cdots & \cdots & \cdots & \cdots \\ 0 &0 & 0 & \cdots & 0 & -\eta_{m-1} & p_{m-1}\eta_{m-1} \\ 0 &0 & 0 & \cdots & 0 & 0 & -\eta_m \end{array}\right],
\end{align*}
for some $p_1, \ldots, p_{m-1} \in (0, 1]$ and $\bm \alpha = [1,0, \ldots, 0]$.  

Despite its simple structure,  it is almost as general as the PH-distribution because any acyclic PH-distribution has an equivalent Coxian representation \cite{Cumani_1982, Dehon_1982}.  Moreover, due to the sparsity of the PH-generator, it is numerically easier to compute the roots and poles in $\mathcal{I}_q$ and $\mathcal{J}_q$ even for large $m$.  In Section \ref{section_numerical_results}, we give comparisons between fitting general PH-distributions and fitting Coxian distributions.

\end{example}

\subsection{Approximation of the scale function of a general spectrally negative \lev process} \label{section_approximation}
Under a suitable assumption, the scale function obtained above can be used to approximate the
scale function of a general spectrally negative \lev process.  

By
Proposition 1 of \cite{Asmussen_2004}, there exists,
for any spectrally negative \lev process $X$, a sequence of
spectrally negative PH \lev processes  $X^{(n)}$
converging to $X$ in $D[0,\infty)$.  This is equivalent to saying
that $X_1^{(n)} \rightarrow X_1$ in distribution by Corollary VII 3.6 of \cite{Jacod_Shirayev_2003}; see also \cite{Pistorius_2006}.   Suppose $\psi^{(n)}$ (resp.\ $\psi$) and
$W^{(q,n)}$ (resp.\ $W^{(q)}$) are the Laplace exponent and the scale
function of $X^{(n)}$ (resp.\ $X$).

\begin{proposition}  \label{proposition_convergence}If the jump parts of $X^{(n)}$ and $X$ have paths of bounded variation with the common Gaussian coefficient $\sigma \geq 0$.  Then, $W^{(q,n)}(x) \rightarrow W^{(q)}(x)$ as $n \uparrow \infty$ for every $x \geq 0$. 
\end{proposition}
\begin{proof}
Because $W^{(q)}$ is an increasing function, the measure $W^{(q)}(\diff x)$ associated with the distribution of $W^{(q)}(0,x]$ is well-defined and we obtain as in page 218 of \cite{Kyprianou_2006},
\begin{align}
\int_{[0,\infty)} e^{-s x} W^{(q)}(\diff x) = \frac s {\psi(s)-q}. \label{scale_version_modified}
\end{align}
By assumption,  both $X^{(n)}$ and $X$ can be decomposed into a difference between two subordinators  plus a Brownian motion.  Hence, the convergence in distribution of $X_1^{(n)}$ to $X_1$  implies $\psi^{(n)}(s) \rightarrow \psi(s)$ for every $s > 0$.  Now in view of
(\ref{scale_version_modified}), the convergence of the scale function holds by the continuity of the scale function and the
continuity theorem; see \cite{Feller_1971}, Theorem 2a,
XIII.1.  
\end{proof}

This proposition does not directly imply the same results when the jumps are of unbounded variation.   However, at least in principle, \eqref{scale_version_modified} can be approximated by that of a spectrally negative PH \lev process.  First, because \eqref{scale_version_modified} converges to zero as $s \rightarrow \infty$ for the case of unbounded variation in view of Lemma \ref{lemma_zero}, the main issue essentially is for $s$ on compacts on condition that the approximation can be done by those of unbounded variation; we revisit this issue in Section  \ref{subsection_contributions_limitations}.  Now consider splitting $X$, for small $\varepsilon > 0$, into the sum of $X^{(\varepsilon,1)} + X^{(\varepsilon,0)}$ whose Laplace exponents are
\begin{align} \label{equation_splitting}
\begin{split}
\psi_{\varepsilon,1}(s)  &:=  c s +\frac{1}{2}\sigma^2 s^2 + \int_{(-\infty,-\varepsilon]} (e^{s z}-1 - s z 1_{\{z > -1\}}) \Pi (\diff z),  \\
\psi_{\varepsilon,0}(s)  &:=  \int_{(-\varepsilon,0)} (e^{s z}-1 - s z 1_{\{z > -1\}}) \Pi (\diff z),  
\end{split}
\end{align}
respectively.  The former has jumps of bounded variation and hence by Proposition \ref{proposition_convergence} we can construct a sequence of Laplace exponents $(\psi_{\varepsilon,1}^{(n)})_{n \geq 1}$ of the PH \lev processes converging to it.  For the latter, for sufficiently small $\varepsilon$ (for $s$ on compacts),
\begin{align*}
\psi_{\varepsilon,0}(s)  &\approx  \frac {s^2} 2 \int_{(-\varepsilon,0)} z^2\Pi (\diff z),  
\end{align*}
which can be approximated by that of the Brownian motion with a Gaussian coefficient $\sigma^{\varepsilon} := (\int_{(-\varepsilon,0)} z^2\Pi (\diff z))^{1/2}$.  This implies that $\psi(s)$ and hence  \eqref{scale_version_modified} as well can be approximated by  $\psi_{\varepsilon,1}^{(n)}(s) + \frac {\sigma^\varepsilon} 2 s^2$ (corresponding to PH \lev processes) at least on compacts if we choose $\varepsilon$ sufficiently small.

\subsection{Contributions and Limitations} \label{subsection_contributions_limitations}  In view of Proposition \ref{proposition_convergence} above, at least in principle, a scale function can be approximated by that of the PH \lev process given that the latter can be computed. This approach certainly has both pros and cons.  Here, we conclude this section by discussing its contributions and  limitations; in the next section we further evaluate it numerically.

\textbf{Contributions}.  The main advantage of this approach is due to its explicit form as in \eqref{scale_function} and \eqref{scale_function_distinct}.   One important application of the scale function is its expression of the resolvent measure of a \lev process or its reflected process; see, for example, \cite{Kyprianou_2006,Pistorius_2004}.  For example, we can write
\begin{align}
U_A(x) := \E^{x}\left[ \int_0^{\tau_{A}^-} e^{-qt} f(X_t) \diff t \right] &=
W^{(q)} (x-A) \Psi_f (A) -
\Theta_f(x;A), \quad x \in \R, \label{eq_resolvent}
\end{align}
for any measurable function $f$ satisfying $\int_0^\infty e^{-\lapinv y} |f(y+A) | \diff y < \infty$ for any $A \in \R$,
where
\begin{align*}
\Psi_f(A) := \int_0^\infty e^{-\lapinv y} f(y+A)\diff y \quad \textrm{and} \quad
\Theta_f(x; A) := \left\{ \begin{array}{ll} \int_{A}^x  W^{(q)}(x-y) f(y) \diff y, & x > A, \\ 0, & x \leq A; \end{array} \right.
\end{align*}
for the derivation, see, for example, \cite{Egami-Yamazaki-2011} and \cite{Yamazaki_2012}.  

Here potential difficulty lies in the computation of $\Theta_f(x; A)$.   When  the integral $\int_A^\infty e^{-\beta x} f(x) \diff x$ can be written analytically, it can be approximated directly by numerical Laplace inversion via the convolution theorem.  However, this can be difficult depending on  the form of the function $f$.  There are 
examples where the integral with respect to the resolvent \eqref{eq_resolvent} needs to be computed repeatedly (and replacing $f$ with it), and hence \eqref{eq_resolvent} needs to be computed for the function $f$ that  itself is dependent on the scale function.
For example, in Carr's randomization (Canadization) method, one wants to approximate the value function for a constant finite time horizon problem with that of an Erlang distributed time horizon problem for a sufficiently large shape parameter $k$; see, for example, \cite{Kyprianou_Pistorius_2003}.   This requires applying repeatedly (for $k$ times) the integration with respect to the resolvent measure as in \eqref{eq_resolvent}.  For the PH case, the resulting value function can be obtained explicitly in many cases; otherwise, the computation is practically infeasible unless $k$ is small.  Another example where the PH fitting may be more suitable than numerical Laplace inversion  is the case where
the function $f$ is written in terms of the \lev measure, e.g.\ the Gerber-Shiu function.

%
%
%
%

\textbf{Limitations.} These advantages can be enjoyed only on condition that the scale function can be approximated accurately; this is directly dependent on how a \lev measure is approximated by a PH-distribution (times a finite constant).  Unfortunately,  except for the case the \lev measure has a completely monotone density, there does not currently exist a PH-fitting algorithm that always converges and works for arbitrary distributions.  Because the numerical Laplace inversion approach is known to work with high speed and high accuracy, this is clearly a major drawback of the PH-fitting approach. 

A particular weak point arises when the jump part of the process to be approximated is of infinite activity or of unbounded variation. Under the PH-fitting, these infinitesimal jumps must be approximated either by compound Poisson processes and/or Brownian motions.  In particular, when $\sigma = 0$ and $\Pi(0,\infty) = \infty$, $W^{(q)'}(0+) = \infty$ by Remark \ref{lemma_zero} and hence it is expected to be difficult to fit in these cases.

When the approximation is done by compound Poisson processes, we need to take the jump intensity $\lambda$ arbitrarily high.  Indeed one needs to first choose a  small truncation parameter $\varepsilon$ and $\lambda = \Pi(\varepsilon, \infty)$ and then fit a probability distribution to $\Pi(\varepsilon, \cdot)/\lambda$. Consequently, the error of approximating $\Pi(\varepsilon, \cdot)$ directly depends on how large $\lambda$ is.   We therefore face the tradeoff between minimizing the truncation error (by choosing $\varepsilon$ small) and minimizing the error associated with approximating $\Pi(\varepsilon, \cdot)$.

As we have discussed above, when the jumps are of unbounded variation, the use of Brownian motion by the decomposition \eqref{equation_splitting} can also be considered.  However, we also face a similar issue of choosing the value of $\varepsilon$ in view of \eqref{equation_splitting}.  In particular, we expect that the approximation would be more difficult for the case of unbounded variation with $\sigma = 0$. For the case with  $\sigma > 0$, the Gaussian coefficients of approximating processes are at least $\sigma$; consequently \eqref{scale_version_modified} converges to zero (uniformly) as $s \rightarrow \infty$ and  hence we can focus on $s$ on compacts. On the other hand, this is not the case when $\sigma = 0$ because the Gaussian coefficients of the approximating processes are $\sigma^\varepsilon$ that vanish as $\varepsilon \rightarrow 0$.  


We remark here that these  issues may be resolved for the case the \lev measure to be approximated has a completely monotone density.  For the finite activity case, the approximation can be done by hyperexponential \lev processes. Otherwise, it can be done by meromorphic \lev processes \cite{Kuznetsov_2011} (with some truncation), whose scale function is a slight modification of \eqref{scale_function_distinct} with $n = \infty$.  The fitting is known to be fast and accurate, and thanks to the interlacing condition as in \eqref{interlacing1} and \eqref{interlacing2}, the solutions to $\psi(\cdot)=q$ can be obtained easily.

\section{Numerical Experiments} \label{section_numerical_results}
In this section, we illustrate numerically the effectiveness of the PH-fitting of scale functions through a series of numerical experiments.    For each \lev measure we shall consider below, the EM-algorithm of \cite{Asmussen_1996} is used to fit PH-distributions;  fitted PH \lev processes and their scale functions are then constructed.  We evaluate the accuracy of the PH-fitting approximation by comparing some fluctuation identities approximated by the fitted scale functions  and those approximated by simulation.  Because it is widely known that fitting a distribution with a completely monotone density can be easily done with high accuracy, here we focus on approximating the \lev process whose \lev density \emph{is not} completely monotone.  

\subsection{The EM-algorithm} 
For any arbitrary non-negative continuous distribution, the EM-algorithm approximates it by constructing a sequence of parameter estimates $(m,{\bm \alpha^{(k)}, \bm T^{(k)}}; k \geq 0)$ for fixed number of phases $m$. 
For a fixed non-negative distribution with density $h$ (to be approximated) and a PH-distribution with  $(m,{\bm \alpha, \bm T})$, the \emph{Kullback-Leibler divergence} is given by
\begin{align*}
\int_0^\infty \log \frac {h(x)} {f(x; \bm \alpha, \bm T)} h(x) \diff x = \int_0^\infty \log (h(x)) h(x) \diff x - \int_0^\infty  \log (f(x; \bm \alpha, \bm T)) h(x) \diff x,
\end{align*}
which is non-negative and equals zero if and only if $h(\cdot)=f(\cdot; \bm \alpha, \bm T)$ Lebesgue-a.e.  The idea is to obtain $(\bm \alpha, \bm T)$ such that this is minimized.  Because the first term on the right-hand side depends only on the given density $h$, it is equivalent to maximizing 
\begin{align*}
\delta(\bm \alpha, \bm T; h) := \int_0^\infty  \log (f(x; \bm \alpha, \bm T)) h(x) \diff x.
\end{align*} Instead of doing so directly, the EM-algorithm first generates $({\bm \alpha^{(0)}, \bm T^{(0)}})$ randomly and repeats the so-called \emph{EM-step} to construct $({\bm \alpha^{(k+1)}, \bm T^{(k+1)}})$ from $({\bm \alpha^{(k)}, \bm T^{(k)}})$.
This step consists of evaluating the conditional expectation (\emph{E-step}) and maximizing it (\emph{M-step}).   Recall that a PH-distribution corresponds to that of an absorption time of a continuous-time Markov chain $Y$; see Section \ref{scale_phase}. In the E-step, the conditional expectation of the sufficient statistic $S$ of the multi-parameter exponential family
\begin{align}
\int_0^\infty E[S | x, {\bm \alpha^{(k)},  \bm T^{(k)}}] h(x) \diff x \label{expectation_statistic}
\end{align}
is computed.   Here $S$ consists of the random variables representing (1) the number of times $Y$ starts in each state, (2) the length of time $Y$ spends in each state and (3) the number of jumps $Y$ makes between any combination of states.   The expectation $E$ is under the condition that $Y$ is a continuous-time Markov chain with  initial distribution $\bm \alpha^{(k)}$ and transition matrix $\bm T^{(k)}$ and its absorption time equals $x$.  In the M-step, a new estimate $({\bm \alpha^{(k+1)},  \bm T^{(k+1)}})$ is computed. For a more detailed description of the EM-algorithm, we refer the reader to \cite{Asmussen_1996}.

By construction (due to Jensen's inequality), it is ensured that
\begin{align*}
\delta({\bm \alpha^{(k+1)},  \bm T^{(k+1)}};h )\geq \delta({\bm \alpha^{(k)}, T^{(k)}}; h), \quad k \geq 0,
\end{align*}
and hence it converges.  Ideally, the limit is the desired maximum likelihood estimates, but unfortunately this is not guaranteed; as discussed in Dempster et al.\ \cite{Dempster_1977} and Wu \cite{Wu_1983}, it may converge to local maxima or even saddle points.  Moreover, the performance clearly depends on the fixed number of phases $m$.  It is therefore important to test the algorithm for various examples of $h$ and for various values of $m$.

For our numerical results, we use EMpht (with slight modification for our purpose) which is written in C and is publicly available\footnote{Available at http://home.imf.au.dk/asmus/pspapers.html as of July 24, 2013.}.   The program is capable of fitting PH-distributions to a sample or another given continuous distribution.   Because our objective here is to evaluate the accuracy of the PH-fitting algorithm, we focus on the latter.   EMpht gives a sequence of estimates as addressed above.  In particular, \eqref{expectation_statistic} is approximated by its discretization with each interval less than $0.05$ and the probability mass in each interval less than $0.01$.  Its support is also truncated to $[0,25]$. For all numerical results given in this section, we use Windows 7, Intel Xeon CPU E$5$-$2620$, $2.00$GHz and $24.0$GB of RAM.  Except for the EMpht algorithm (used to obtain the approximation of the PH distribution), all the codes are written and run in MATLAB.  Some of the fitted PH-distributions and the parameters of the scale functions are given in the appendix.

\begin{table}　
\begin{tabular}{|c|rr|rr|rr|rr|}
\hline
$m$ & \multicolumn{2}{c|}{(i) Normal} & \multicolumn{2}{c|}{(ii) Weibull} & \multicolumn{2}{c|}{(iii) Lognormal}  & \multicolumn{2}{c|}{(iv) Uniform} \\
\hline 
 3   &  2.19 &(1010)  &   0.70 &(270)& 0.68 &(280) & 0.23 &(360) \\
 6   &  25.43 &(2040)& 65.35 &(3880)  & 19.97 &(1210)  & 3.97 &(1420)\\
 9   &  56.41 &(1430) &  117.73 &(2540) & 183.94 &(2730)  & 55.36 &(4370)\\
 12 &  159.57 &(1330)   & 251.54 &(2280)  &  1846.03 &(9130)& 777.13 &(9940)\\
  15 &  1612.30 &(2260)  & 900.55 &(3290)  & 1868.46 &(4820)   & 6678.44  &(24070)\\
  \hline
\end{tabular} \\
Regular Fit
\vspace{0.2cm}

\begin{tabular}{|c|rr|rr|rr|rr|}
\hline
$m$ & \multicolumn{2}{c|}{(i) Normal} & \multicolumn{2}{c|}{(ii) Weibull} & \multicolumn{2}{c|}{(iii) Lognormal}  & \multicolumn{2}{c|}{(iv) Uniform} \\
\hline 
 3   &  1.45 &(580) &   0.76 & (280)& 0.45 &(160) & 0.19 &(260) \\
 6   &  45.22 &(1710) & 49.71 &(2310)  & 126.61 &(2900) & 0.83 &(470)\\
 9   &  204.92 &(1250)  &  552.06 &(2890) & 1950.75 &(4840) & 8.31 &(1500)\\
 12  &  360.72 &(810) & 1433.51& (2030)  &  801.71& (3840)& 18.61 &(1330)\\
  15 &  878.76 &(810)   & 4476.22 &(2310) & 4264.34 &(7310)  & 39.12& (1260)\\
  \hline
\end{tabular} \\
Coxian Fit
\vspace{0.2cm}
\caption{Computation time (in seconds) and the number of EM-steps (in parentheses) required to compute the approximating PH-distributions.} \label{computation_time}
\end{table}

\begin{table}  
\begin{tabular}{|c|rr|rr|rr|rr|}
\hline
 & \multicolumn{2}{c|}{(i) Normal} & \multicolumn{2}{c|}{(ii) Weibull} & \multicolumn{2}{c|}{(iii) Lognormal} & \multicolumn{2}{c|}{(iv) Uniform} \\
\hline 
 $m$  &    \multicolumn{1}{c}{Regular} &  \multicolumn{1}{c|}{Coxian}&  \multicolumn{1}{c}{Regular} &  \multicolumn{1}{c|}{Coxian} &   \multicolumn{1}{c}{Regular} &  \multicolumn{1}{c|}{Coxian} &  \multicolumn{1}{c}{Regular} &  \multicolumn{1}{c|}{Coxian} \\
\hline
 3   &   0.330906 & 0.338259 & 0.608658 &0.348224 &  0.510884 &0.257411& 0.641700 &0.353807 \\
 9   &  1.949233 & 1.080589 & 2.180710 &1.250907 &  8.767836 &0.375460 & 1.629646 &1.190153\\
  15   & 55.028208 &2.434088  & 44.320027 &2.779834  &  72.054121 &0.521596 & 27.589086& 2.795609\\
  \hline
\end{tabular} \\
(a) $\sigma = 1$ and $\lambda = 5$ 
\vspace{0.2cm}
\\
\begin{tabular}{|c|rr|rr|rr|rr|}
\hline
 & \multicolumn{2}{c|}{(i) Normal} & \multicolumn{2}{c|}{(ii) Weibull} & \multicolumn{2}{c|}{(iii) Lognormal} & \multicolumn{2}{c|}{(iv) Uniform} \\
\hline 
 $m$  &    \multicolumn{1}{c}{Regular} &  \multicolumn{1}{c|}{Coxian}&  \multicolumn{1}{c}{Regular} &  \multicolumn{1}{c|}{Coxian} &   \multicolumn{1}{c}{Regular} &  \multicolumn{1}{c|}{Coxian} &  \multicolumn{1}{c}{Regular} &  \multicolumn{1}{c|}{Coxian} \\
\hline
 3   &  0.545884  &  0.313348 & 0.485286 &0.275546 & 0.436516  &0.244288 & 0.668737 & 0.306976 \\
 9   & 2.105709  & 1.164537 & 2.136692 & 1.108915 &1.954952   & 0.432049& 2.788000 & 0.980335\\
  15   &  55.963374 &  2.610069 & 43.190965 &2.492024  & 71.652231  & 0.768451 & 32.040391 & 2.493620\\
  \hline
\end{tabular} \\
(b) $\sigma = 0$ and $\lambda = 5$
\vspace{0.2cm}
  \\
\begin{tabular}{|c|rr|rr|rr|rr|}
\hline
 & \multicolumn{2}{c|}{(i) Normal} & \multicolumn{2}{c|}{(ii) Weibull} & \multicolumn{2}{c|}{(iii) Lognormal} & \multicolumn{2}{c|}{(iv) Uniform} \\
\hline 
 $m$  &    \multicolumn{1}{c}{Regular} &  \multicolumn{1}{c|}{Coxian}&  \multicolumn{1}{c}{Regular} &  \multicolumn{1}{c|}{Coxian} &   \multicolumn{1}{c}{Regular} &  \multicolumn{1}{c|}{Coxian} &  \multicolumn{1}{c}{Regular} &  \multicolumn{1}{c|}{Coxian} \\
\hline
 3   &  0.477291  &  0.232331 & 0.455249 & 0.244142 & 0.547964 & 0.237181 & 0.538500 & 0.265110\\
 9   &  1.470776 & 0.504720 & 1.494699 &0.496120 & 1.821302 & 0.427017 & 1.901286& 0.415698 \\
  15   & 54.557099 &  0.758591 & 42.471325 & 0.777695 & 72.913717 & 0.704831 & 27.037137& 0.655618 \\
  \hline
\end{tabular} \\
(c) $\sigma = 1$ and $\lambda = 10$ 
\caption{Computation time (in seconds) to compute the parameters of the scale functions.} \label{computation_time_scale_function_ruin}
\end{table}

%

\begin{figure}[htbp]
\begin{center}
\begin{minipage}{1.0\textwidth}
\centering
\begin{tabular}{cc}
\includegraphics[scale=0.6]{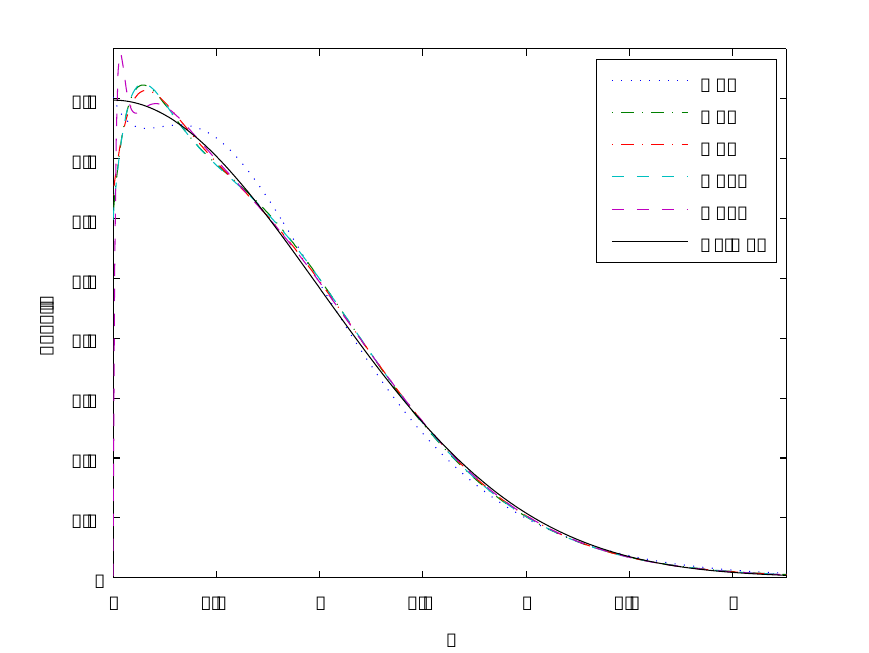}  & \includegraphics[scale=0.6]{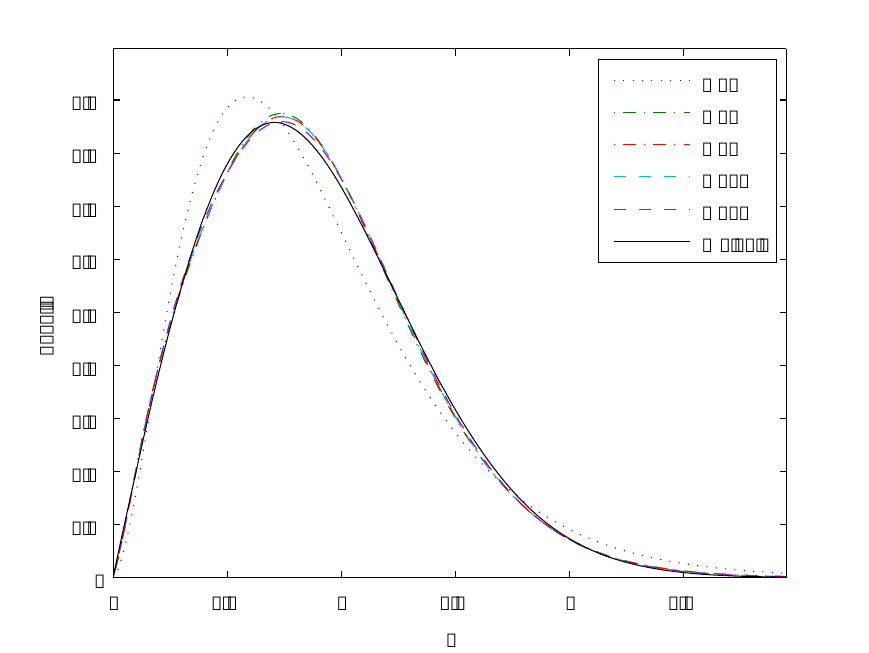}  \\
(i) Normal$(0,1)$ &  (ii) Weibull$(2,1)$ \\
\includegraphics[scale=0.6]{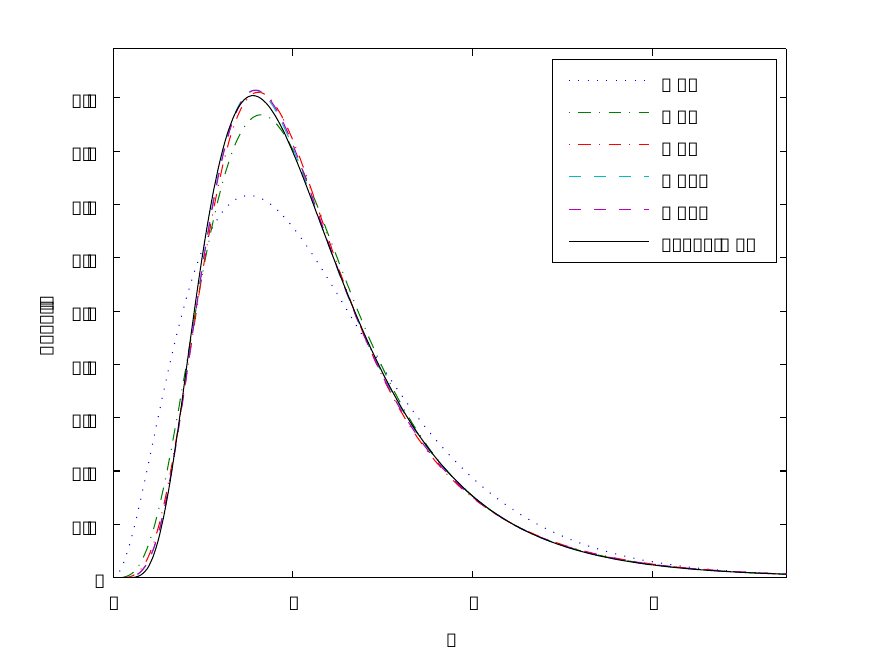}  & \includegraphics[scale=0.6]{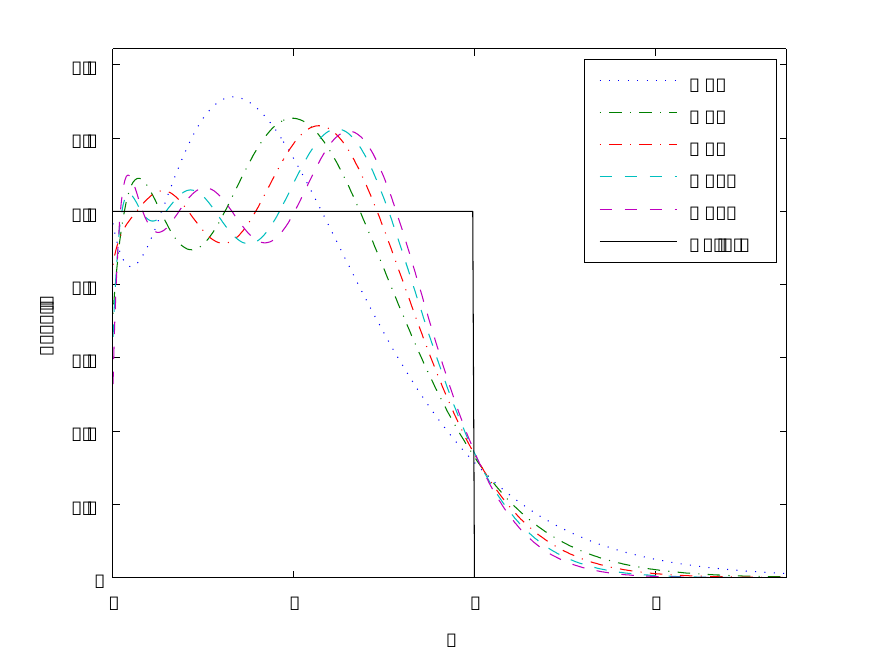} \\
(iii) Lognormal$(0,0.5)$ & (iv) Uniform$(0,2)$ 
\end{tabular}
\end{minipage}
\caption{Regular fit: densities of the target distribution and fitted PH-distributions for $m=3,6,9,12,15$.} \label{fitted_plot}
\end{center}
\end{figure}

\begin{figure}[htbp]
\begin{center}
\begin{minipage}{1.0\textwidth}
\centering
\begin{tabular}{cc}
\includegraphics[scale=0.6]{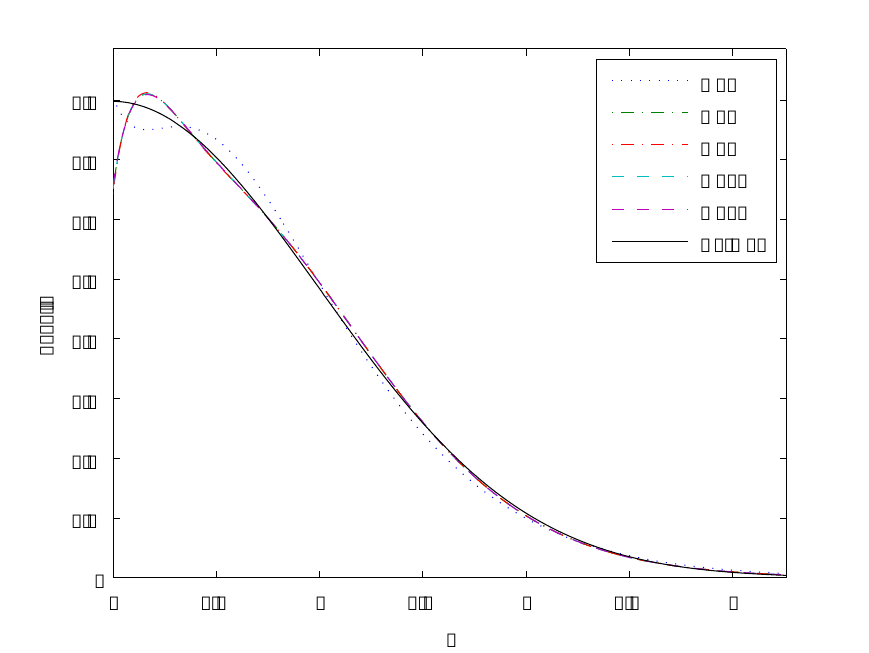}  & \includegraphics[scale=0.6]{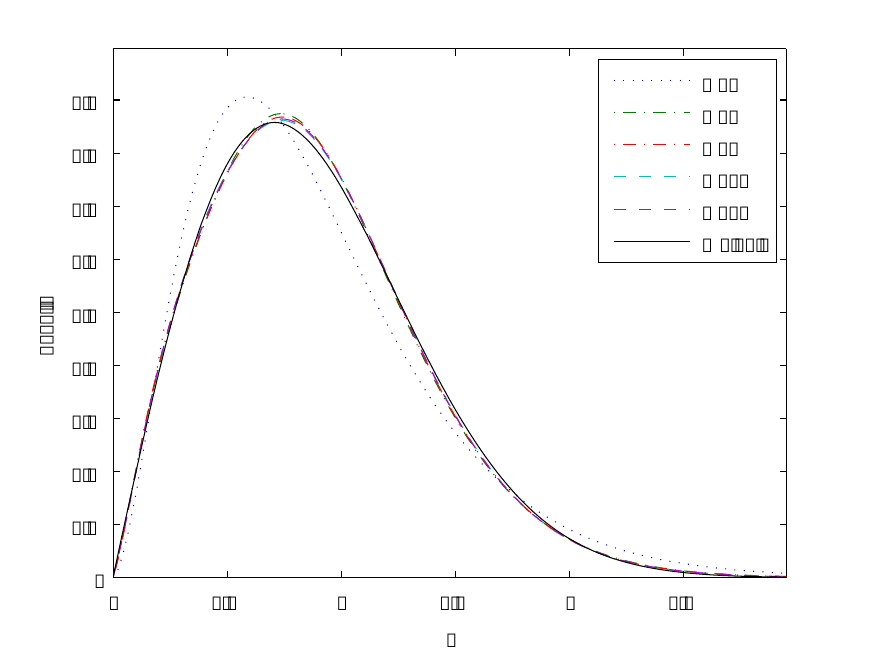}  \\
(i) Normal$(0,1)$ &  (ii) Weibull$(2,1)$ \\
\includegraphics[scale=0.6]{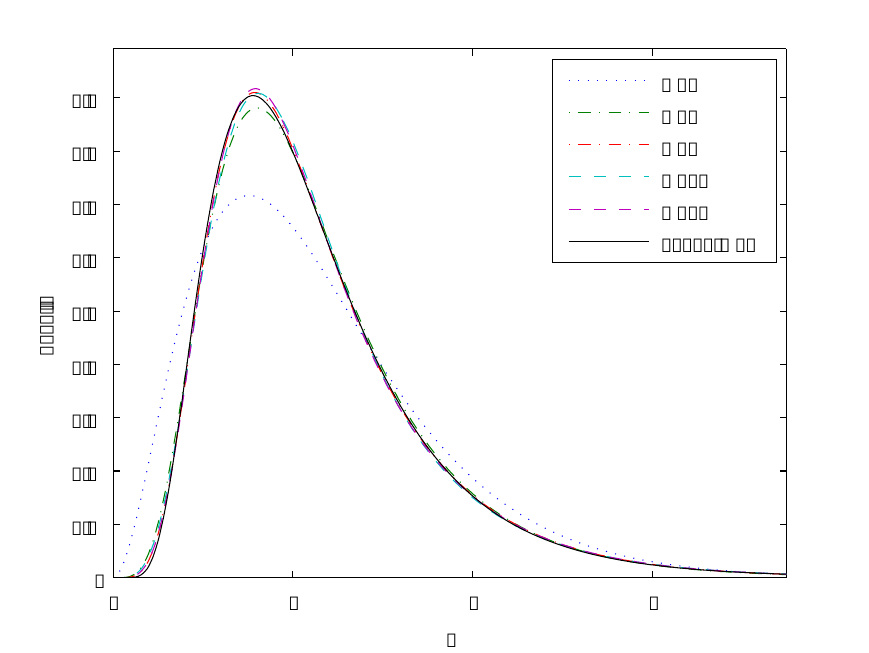}  & \includegraphics[scale=0.6]{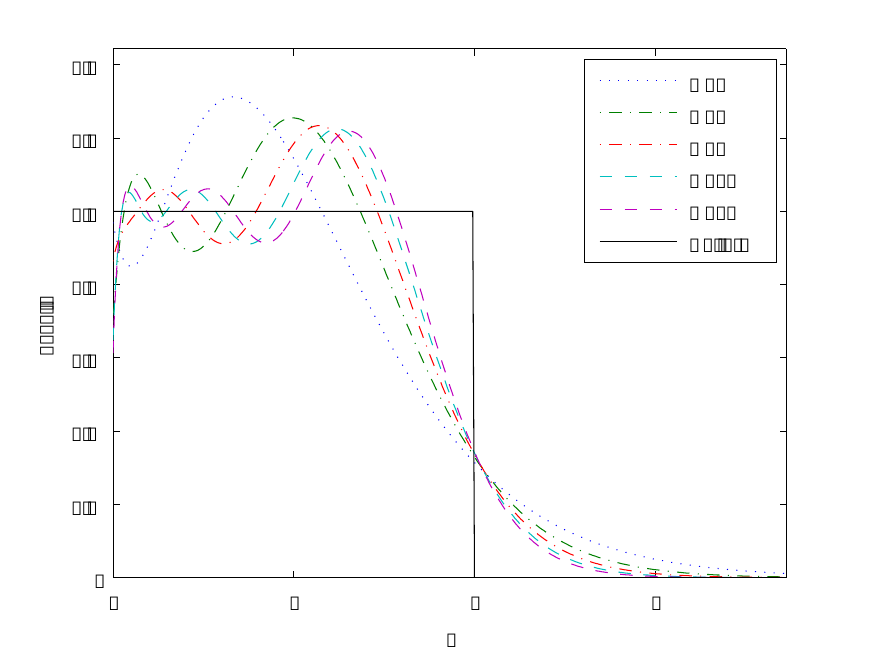} \\
(iii) Lognormal$(0,0.5)$ & (iv) Uniform$(0,2)$ 
\end{tabular}
\end{minipage}
\caption{Coxian fit: densities of the target distribution and fitted Coxian distributions for $m=3,6,9,12,15$.} \label{fitted_plot_coxian}
\end{center}
\end{figure}

\subsection{Computation of scale functions}  \label{numerics_EM}
%

We consider approximating the scale function for a compound Poisson process with i.i.d.\  jumps (with or without a Brownian motion component).  For the jump distribution, we consider (i) (the absolute values of) normal with mean zero and standard deviation $\nu = 1$, (ii) Weibull with $\beta = 2$ and $\gamma = 1$, (iii) lognormal with $\alpha = 0$ and $\kappa = 0.5$ and (iv) uniform with $a=0$ and $b=2$, where the probability densities $h$ at $x$ are, respectively,
\begin{align*}
\frac 2 {\sqrt{2 \pi \nu^2}} \exp \left\{ - \frac {x^2} {2 \nu^2} \right\}, \quad \beta \gamma^\beta x^{\beta -1} \exp \left\{ - (\gamma x)^\beta\right\}, \quad \frac 1 {\kappa x \sqrt{2 \pi}} \exp \left\{ - \frac {(\log x -\alpha)^2} {2 \kappa^2} \right\}, \quad \frac 1 {b-a}.
\end{align*}
Notice (ii) and (iii) do not admit Laplace exponents expressed in analytical form; as we discussed in introduction, the PH fitting approach has advantage over Laplace inversion method when the Laplace exponents do not admit analytical forms.

For various values of $m$ and for each jump distribution (i)-(iv) above, we fit a general PH-distribution (\emph{regular fit}) and a Coxian distribution (\emph{Coxian fit}).  This can be done because by construction the EM-algorithm preserves the zeros in $(\bm \alpha, \bm T)$ and one only needs to choose the initial estimate $({\bm \alpha^{(0)}, \bm T^{(0)}})$ in the desired class.  Here we consider also the Coxian fit because, as we have discussed in Example \ref{example_coxian}, it is potentially as powerful as the regular fit, and more importantly the computation time required to compute the scale function is expected to be smaller.
  In order to guarantee that the output has converged sufficiently, the value of $\delta({\bm \alpha^{(k)}, \bm T^{(k)}}; h)$ is monitored every $10$ steps;  the algorithm stops as soon as the difference 
$\delta({\bm \alpha^{(k)}, \bm T^{(k)}}; h)- \delta({\bm \alpha^{(k-10)}, \bm T^{(k-10)}}; h)$ becomes less than $10^{-6}$.

In Table \ref{computation_time}, for both the regular and Coxian fit, we show the time required to compute the approximation $(m,  \bm \alpha_m, \bm T_m)$ for (i)-(iv) and $m=3,6,9,12,15$ along with the required number of iterations. Because the initial input  $(\bm \alpha^{(0)},  \bm T^{(0)})$ is randomly chosen, the required run-time is random and hence does not necessarily increase in $m$.  However, overall the computation time tends to increase rapidly as $m$ increases, while the number of iterations does not.  This implies that the time required \emph{for each EM-step} increases rapidly in $m$, and hence it is not possible to make $m$ arbitrarily large.   Regarding the comparison between the regular and Coxian fit, we see that the latter is not necessarily faster than the former; the exception is the uniform case where the Coxian fit terminates quickly even for large $m$.

 The probability density functions of the target and the fitted PH-distributions are plotted in Figures \ref{fitted_plot} and  \ref{fitted_plot_coxian} for the regular and Coxian fit, respectively; see the appendix for the fitted PH-distributions for $m=3,9$.   For all cases,  no significant difference is  observed between the regular and Coxian fit in view of Figures \ref{fitted_plot} and  \ref{fitted_plot_coxian}. For the Weibull and lognormal cases, we see that the approximation gets more accurate as $m$ increases.  For the normal case, although the fitting is already reasonably accurate when $m=3$, the fitting in the neighborhood of zero is not accurately done for larger values of $m$.   Regarding the uniform case, the PH-fitting is known to be very difficult. Indeed, as far as the density approximation is concerned, it is far less accurate compared with the other three cases.  We shall see below, however, that the approximation of scale functions is nonetheless accurate.

We consider the \lev processes $X^{(\textrm{normal})}$, $X^{(\textrm{weibull})}$, $X^{(\textrm{log})}$ and $X^{(\textrm{unif})}$ in the form (\ref{levy_canonical}) where the distribution of $Z$ is given by  (i)-(iv), respectively, with common parameters $\mu = 5$ and  $q = 0.05$ and
\begin{enumerate}
\item[(a)] $\sigma = 1$ and $\lambda = 5$, 
\item[(b)]  $\sigma = 0$ and $\lambda = 5$,
\item[(c)]  $\sigma = 1$ and $\lambda = 10$. 
\end{enumerate}
 Using the fitted PH-distributions computed under the regular/Coxian fit, we construct PH \lev processes $\widetilde{X}^{(\textrm{normal})}$, $\widetilde{X}^{(\textrm{weibull})}$, $\widetilde{X}^{(\textrm{log})}$ and $\widetilde{X}^{(\textrm{unif})}$ for $m=3,9,15$, and see how their scale functions can be used as approximations.  The computation of the scale function amounts to computing the elements in $\mathcal{I}_q$ and $\mathcal{J}_q$, or the roots and poles of $\psi(\cdot) = q$.  Here we use the built-in MATLAB functions solve() and eig() to compute the former and the latter, respectively; these values for $m = 3,9$ are given in the appendix.  In all our numerical results given below, the obtained roots in $\mathcal{I}_q$ and the positive root $\Phi(q)$ are all distinct (and hence  the scale function is given by \eqref{scale_function_distinct}).  This is consistent with Remark \ref{remark_multiplicity_unlikely}.   Table \ref{computation_time_scale_function_ruin} shows the time required to compute the coefficients of the scale function for (a)-(c)  for both the regular and Coxian fit cases.  As is expected, the Coxian fit case runs much faster due to the sparsity of the PH-generator.  Indeed, while the computation time for the regular fit case increases nonlinearly  in $m$, it increases approximately linearly for the Coxian case.  


\subsection{Approximation of scale functions and derivatives}  We now evaluate the accuracy of the fitted scale functions as approximation tools. In our first experiment, we use the identity as in \eqref{laplace_in_terms_of_z}
\begin{align}
\E^x \left[ e^{-q \tau_b^+} 1_{\{ \tau_0^- > \tau_b^+, \,  \tau_b^+ < \infty \}}\right] = \frac {W^{(q)}(x)} {W^{(q)}(b)}, \label{criteria1}
\end{align}
and the well-known fluctuation identities of the \emph{reflected process}:
\begin{align}
\E^x \Big[ \int_0^{\nu^a} e^{-q t} \diff L_t^a \Big] =  \frac {W^{(q)}(x)}  {W^{(q)'}(a)}, \quad 0 \leq x \leq a. \label{criteria2}
\end{align}
Here $L_t^a := \sup_{0 \leq s \leq t} (X_s - a) \vee 0$, $t \geq 0$, and $\nu^a := \inf \{ t > 0: U_t^{a} < 0 \}$ is the time of ruin of the reflected process $U_t^a := X_t - L_t^a$.  This is an important quantity of interest in the field of insurance; $X$ is seen as the surplus of an insurance company and $L_t^a$ the cumulative amount of dividends under the barrier strategy with barrier level $a$.  We refer the reader to, among others,  \cite{Avram_et_al_2007} and \cite{Loeffen_2008} regarding the insurance dividend problem for a spectrally negative \lev process.

We compute the right-hand sides of \eqref{criteria1} and \eqref{criteria2} for $\widetilde{X}$ explicitly via \eqref{scale_function_distinct} and approximate the left-hand sides for $X$ via Monte Carlo simulation based on $100,000$ sample paths.  For the simulated results, Brownian motions are approximated by random walks with time step $\Delta t = T/ 100$ for each interarrival time $T$ between jumps.  
We consider starting points $x = 1,\ldots, 4$ with common parameters $a=b = 5$.  Tables \ref{scale_results_general} and  \ref{results_derivative_general} give the results for (a)  and Tables \ref{scale_results_general_no_gaussian} and  \ref{results_derivative_general_no_gaussian} give the results for (b).  In both figures, we show the computation time required for the simulated results; that of the PH-fitting is omitted because these can be computed instantaneously (once the parameters of the scale function are computed).

From these tables, we see that these expectations for $X$ are approximated very precisely, and can infer that the scale functions of $X$ and their derivatives are approximated efficiently by those of $\widetilde{X}$.  We also see that the performance between the regular and Coxian fit is almost the same in all cases; hence in view of the computation time discussed above, the Coxian fit is indeed a powerful alternative to the regular fit. 
The approximation overall tends to improve in $m$ (which may not be clear for the normal case).  Except for the uniform case, the differences between the cases $m=9$ and $m=15$ are negligible and hence we can infer that increasing the value of $m$ further would not have a significant improvement in the approximation.  For the uniform case, on the other hand, we expect  that we can improve it by choosing $m$ higher.  However, in spite of the performance of the density approximation for the uniform case as in Figures \ref{fitted_plot} and  \ref{fitted_plot_coxian}, the approximation results for the uniform case are surprisingly accurate.


\begin{table}[ht] 
\begin{tabular}{c}
\centering 
\begin{tabular}{|r|rr|rr|rr|rr|} 
\hline
& \multicolumn{2}{c|}{$m=3$}& \multicolumn{2}{c|}{$m=9$} & \multicolumn{2}{c|}{$m=15$} & \multicolumn{2}{c|}{simulation}\\ 
\hline 
$x$ & Regular & Coxian & Regular & Coxian & Regular & Coxian & mean (95\% CI) & time \\
\hline
1&0.41128& 0.41127 & 0.40906& 0.40902 & 0.40900& 0.40904 &    0.41226 (0.40922, 0.41530) & 329.81\\
2&0.60625& 0.60625 & 0.60565& 0.60563 & 0.60560& 0.60563 &  0.60712 (0.60434, 0.60991)& 373.98\\
3&0.76295& 0.76295 & 0.76313& 0.76313 & 0.76312& 0.76313 &  0.76651 (0.76429, 0.76873) &  333.79\\
4&0.89141 & 0.89141 & 0.89167& 0.89167 & 0.89166 & 0.89167 &  0.89154 (0.88982,  0.89327) & 209.88\\
\hline
\end{tabular}   
\end{tabular} \\
(i) Normal$(0,1)$ \\
%
%
\begin{tabular}{c}
\centering 
\begin{tabular}{|r|rr|rr|rr|rr|} 
\hline
& \multicolumn{2}{c|}{$m=3$}& \multicolumn{2}{c|}{$m=9$} & \multicolumn{2}{c|}{$m=15$} & \multicolumn{2}{c|}{simulation}\\ 
\hline 
$x$ & Regular & Coxian & Regular & Coxian & Regular & Coxian & mean (95\% CI) & time \\
\hline
1&0.33262& 0.33262 & 0.32745& 0.32742 & 0.32722& 0.32735 &   0.32862
 (0.32617,  0.33107) & 468.46\\
2&0.52664& 0.52664 & 0.52509& 0.52508 & 0.52501 & 0.52506 &   0.52326 (0.52087, 0.52566)&  566.18\\
3&0.69920& 0.69920 & 0.69906& 0.69906 & 0.69904& 0.69905 &  0.69759 (0.69511,  0.70008) &  512.66\\
4&0.85546 & 0.85546 &0.85572 & 0.85572& 0.85572 & 0.85572 &   0.85457 (0.85246, 0.85669) &  333.86\\
\hline
\end{tabular}   
\end{tabular} \\
(ii) Weibull$(2,1)$ \\
\begin{tabular}{c}
\centering 
\begin{tabular}{|r|rr|rr|rr|rr|} 
\hline
& \multicolumn{2}{c|}{$m=3$}& \multicolumn{2}{c|}{$m=9$} & \multicolumn{2}{c|}{$m=15$} & \multicolumn{2}{c|}{simulation}\\ 
\hline 
$x$ & Regular & Coxian & Regular & Coxian & Regular & Coxian & mean (95\% CI) & time \\
\hline
1&0.18763& 0.18763 & 0.18258 & 0.18226 & 0.18262& 0.18248 &   0.18389 (0.18180, 0.18597) &  237.75\\
2&0.33763& 0.33766 & 0.33267&  0.33222 & 0.33272 & 0.33248&  0.33231 (0.32932, 0.33531)&  313.26\\
3&0.51826&  0.51826 & 0.51335 &0.51297 & 0.51340& 0.51319  &  0.51447 (0.51166, 0.51727)  & 313.31\\
4&0.73603 &  0.73603 &0.73259 & 0.73234& 0.73262 & 0.73249 &  0.73018 (0.72764, 0.73273)  &  228.31\\
\hline
\end{tabular}   
\end{tabular} \\
(iii) Lognormal$(0,0.5)$ \\
\begin{tabular}{c}
\centering 
\begin{tabular}{|r|rr|rr|rr|rr|} 
\hline
& \multicolumn{2}{c|}{$m=3$}& \multicolumn{2}{c|}{$m=9$} & \multicolumn{2}{c|}{$m=15$} & \multicolumn{2}{c|}{simulation}\\ 
\hline 
$x$ & Regular & Coxian & Regular & Coxian & Regular & Coxian & mean (95\% CI) & time \\
\hline
1&0.26009& 0.26009  & 0.24810 & 0.24810 & 0.24619& 0.24620 &   0.24564 (0.24279, 0.24850) & 259.51\\
2&0.43173& 0.43173    &0.42313&  0.42313 & 0.42192   & 0.42192&   0.42320 (0.42008, 0.42631) & 334.44\\
3&0.61132&  0.61132 & 0.60514&0.60514 & 0.60416& 0.60417  &  0.60567 (0.60299, 0.60834)  &  321.42\\
4&0.79996 &  0.79996 &0.79660 & 0.79660&0.79609 & 0.79609 &   0.79465 (0.79224, 0.79705) & 227.91\\
\hline
\end{tabular}   
\end{tabular} \\
(iv) Uniform$(0,2)$
\caption{Computation of $\E^x [ e^{-q \tau_b^+} 1_{\{ \tau_0^- > \tau_b^+, \, \tau_b^+ < \infty \}}]$ via scale function and simulation for (a) $\sigma = 1$ and $\lambda = 5$.} \label{scale_results_general}
\end{table}

\begin{table} 
\begin{tabular}{c}
\centering 
\begin{tabular}{|r|rr|rr|rr|rr|} 
\hline
& \multicolumn{2}{c|}{$m=3$}& \multicolumn{2}{c|}{$m=9$} & \multicolumn{2}{c|}{$m=15$} & \multicolumn{2}{c|}{simulation}\\ 
\hline 
$x$ & Regular & Coxian & Regular & Coxian & Regular & Coxian & mean (95\% CI) & time \\
\hline
1&0.44287& 0.44286 & 0.44071& 0.44066 & 0.44062& 0.44067 &    0.44247 (0.43948, 0.44546) & 396.20\\
2&0.63249& 0.63249 & 0.63235& 0.63234 & 0.63231& 0.63233 &  0.63625 (0.63330, 0.63919)& 433.87\\
3&0.78076& 0.78076 & 0.78127& 0.78128 & 0.78126& 0.78127 &  0.78556 (0.78323, 0.78789) & 376.56\\
4&0.90013 & 0.90013 &0.90056& 0.90056 & 0.90056 & 0.90056 &  0.90249 (0.90087, 0.90411) & 234.00\\
\hline
\end{tabular}   
\end{tabular} \\
(i) Normal$(0,1)$\\
\begin{tabular}{c}
\centering 
\begin{tabular}{|r|rr|rr|rr|rr|} 
\hline
& \multicolumn{2}{c|}{$m=3$}& \multicolumn{2}{c|}{$m=9$} & \multicolumn{2}{c|}{$m=15$} & \multicolumn{2}{c|}{simulation}\\ 
\hline 
$x$ & Regular & Coxian & Regular & Coxian & Regular & Coxian & mean (95\% CI) & time \\
\hline
1&0.35433& 0.35433 & 0.34982& 0.34980 & 0.34958& 0.34972 &   0.34905 (0.34610,  0.35201) & 552.43\\
2& 0.54527& 0.54527 & 0.54455& 0.54454 & 0.54449& 0.54453 &   0.54324 (0.54023, 0.54625)&  664.64\\
3&0.71240& 0.71240 & 0.71298& 0.71298 & 0.71297 & 0.71297 &  0.71205 (0.70965, 0.71445) & 586.03\\
4&0.86221 & 0.86221 &0.86286& 0.86286 & 0.86287 & 0.86286 &  0.86239 (0.86060, 0.86418) & 367.62\\
\hline
\end{tabular}   
\end{tabular} \\
(ii) Weibull$(2,1)$ \\
\begin{tabular}{c}
\centering 
\begin{tabular}{|r|rr|rr|rr|rr|} 
\hline
& \multicolumn{2}{c|}{$m=3$}& \multicolumn{2}{c|}{$m=9$} & \multicolumn{2}{c|}{$m=15$} & \multicolumn{2}{c|}{simulation}\\ 
\hline 
$x$ & Regular & Coxian & Regular & Coxian & Regular & Coxian & mean (95\% CI) & time \\
\hline
1&0.18117& 0.18117  & 0.17583& 0.17543 & 0.17586& 0.17568 &   0.17619 (0.17415, 0.17824) & 282.18\\
2& 0.32577& 0.32577 & 0.31968& 0.31920  & 0.31974 & 0.31948 &  0.31931
 (0.31640, 0.32221)&  350.95\\
3&0.50410  & 0.50410 & 0.49796& 0.49752  & 0.49801  & 0.49777 &  0.49706 (0.49433,  0.49979) & 351.18\\
4&0.72499 & 0.72498&0.72056& 0.72027 & 0.72059 & 0.72044 &  0.72331 (0.72065, 0.72598) &  251.99\\
\hline
\end{tabular}   
\end{tabular} \\
(iii) Lognormal$(0,0.5)$ \\
\begin{tabular}{c}
\centering 
\begin{tabular}{|r|rr|rr|rr|rr|} 
\hline
& \multicolumn{2}{c|}{$m=3$}& \multicolumn{2}{c|}{$m=9$} & \multicolumn{2}{c|}{$m=15$} & \multicolumn{2}{c|}{simulation}\\ 
\hline 
$x$ & Regular & Coxian & Regular & Coxian & Regular & Coxian & mean (95\% CI) & time \\
\hline
1&0.26487& 0.26487 & 0.25203 & 0.25202 & 0.24969& 0.24969  &    0.24927 (0.24666, 0.25188) & 319.60\\
2&0.43371& 0.43371 & 0.42447&  0.42447 & 0.42312 & 0.42313 &  0.42229 (0.41947, 0.42512)&  417.88\\
3&0.61112 &  0.61112 & 0.60435 &0.60435 & 0.60329& 0.60329  &  0.60631 (0.60317, 0.60945)  &  388.21\\
4&0.79898 &  0.79898 &0.79523 &  0.79523& 0.79465 & 0.79465 &  0.79718 (0.79490, 0.79946) &  264.30\\
\hline
\end{tabular}   
\end{tabular} \\
(iv) Uniform$(0,2)$ \\
\caption{Computation of $\E^x [ e^{-q \tau_b^+} 1_{\{ \tau_0^- > \tau_b^+, \, \tau_b^+ < \infty \}}]$ via scale function and simulation for (b) $\sigma = 0$ and $\lambda = 5$. }  \label{scale_results_general_no_gaussian}
\end{table}

\begin{table}[ht] 
\begin{tabular}{c}
\centering 
\begin{tabular}{|r|rr|rr|rr|rr|} 
\hline
& \multicolumn{2}{c|}{$m=3$}& \multicolumn{2}{c|}{$m=9$} & \multicolumn{2}{c|}{$m=15$} & \multicolumn{2}{c|}{simulation}\\ 
\hline 
$x$ & Regular & Coxian & Regular & Coxian & Regular & Coxian & mean (95\% CI) & time \\
\hline
1&4.07774& 4.07765 & 4.07050& 4.07033 & 4.06985& 4.07029 &   4.17448 (4.13553,  4.21343) & 172.14\\
2&6.01078& 6.01082 & 6.02667& 6.02682 & 6.02617& 6.02653 &   6.15991 (6.11411, 6.20571)& 235.84\\
3&7.56442& 7.56452 & 7.59374& 7.59416 & 7.59355& 7.59379 &   7.77769 (7.73176, 7.82362) & 275.15\\
4&8.83798 & 8.83812 & 8.87277& 8.87330 & 8.87269 & 8.87289 &  9.01534 (8.97360, 9.05709) &  295.44\\
\hline
\end{tabular}   
\end{tabular} \\
(i) Normal$(0,1)$ \\
\begin{tabular}{c}
\centering 
\begin{tabular}{|r|rr|rr|rr|rr|} 
\hline
& \multicolumn{2}{c|}{$m=3$}& \multicolumn{2}{c|}{$m=9$} & \multicolumn{2}{c|}{$m=15$} & \multicolumn{2}{c|}{simulation}\\ 
\hline 
$x$ & Regular & Coxian & Regular & Coxian & Regular & Coxian & mean (95\% CI) & time \\
\hline
1&2.37597& 2.37596 & 2.34729& 2.34712 & 2.34578& 2.34668 &   2.34652 (2.31636, 2.37667) & 638.44\\
2&3.76185 & 3.76187 & 3.76411& 3.76407 & 3.76368& 3.76395 &  3.77443 (3.74053, 3.80833) & 940.63\\
3&4.99446& 4.99448 & 5.01123& 7.59416 & 5.01125& 5.01124 &  5.00423 (4.96857, 5.03990) & 1128.37\\
4&6.11062 & 6.11064 & 6.13423& 8.87330 & 6.13444 & 6.13430 &  6.14054 (6.10623, 6.17486) &  1247.02\\
\hline
\end{tabular}   
\end{tabular} \\
(ii) Weibull$(2,1)$ \\
\begin{tabular}{c}
\centering 
\begin{tabular}{|r|rr|rr|rr|rr|} 
\hline
& \multicolumn{2}{c|}{$m=3$}& \multicolumn{2}{c|}{$m=9$} & \multicolumn{2}{c|}{$m=15$} & \multicolumn{2}{c|}{simulation}\\ 
\hline 
$x$ & Regular & Coxian & Regular & Coxian & Regular & Coxian & mean (95\% CI) & time \\
\hline
1&0.64568& 0.64568 & 0.61838& 0.61663  & 0.61859& 0.61776 &  0.61080 (0.60015,  0.62144) & 53.24\\
2&1.16200  & 1.16200  & 1.12670& 1.12399  & 1.12701 & 1.12557 &  1.12263 (1.10908, 1.13617) & 76.16\\
3&1.78348 & 1.78348 & 1.73866& 1.73550 & 1.73903& 1.73733 &  1.73700 (1.71959, 1.75440) & 93.41\\
4&2.53289 & 2.53289 & 2.48118& 2.47769 & 2.48156 & 2.47970 &  2.49715 (2.47926, 2.51504) & 105.02 \\
\hline
\end{tabular}   
\end{tabular} \\
(iii) Lognormal$(0,0.5)$ \\
\begin{tabular}{c}
\centering 
\begin{tabular}{|r|rr|rr|rr|rr|} 
\hline
& \multicolumn{2}{c|}{$m=3$}& \multicolumn{2}{c|}{$m=9$} & \multicolumn{2}{c|}{$m=15$} & \multicolumn{2}{c|}{simulation}\\ 
\hline 
$x$ & Regular & Coxian & Regular & Coxian & Regular & Coxian & mean (95\% CI) & time \\
\hline
1&1.25907& 1.25909 & 1.18000 & 1.17998 & 1.16775& 1.16778 &  1.18704 (1.16908, 1.20501)& 86.48\\
2&2.08997  & 2.08999  & 2.01243&2.01243  & 2.00127 & 2.00130 &  1.98021 (1.95857, 2.00185) & 125.20\\
3&2.95935 & 2.95938 & 2.87810& 2.87810  & 2.86571& 2.86574 &   2.84949 (2.82471, 2.87427) & 153.69\\
4&3.87252 & 3.87255 & 3.78872& 3.78872 & 3.77606 & 3.77609 &   3.72817 (3.70147, 3.75487) & 171.65\\
\hline
\end{tabular}   
\end{tabular} \\
(iv) Uniform$(0,2)$ \\
\caption{Computation of $\E^x \big[ \int_0^{\nu^a} e^{-q t} \diff L_t^a \big] $ via scale function and simulation for (a) $\sigma = 1$ and $\lambda = 5$.  } \label{results_derivative_general}
\end{table}

\begin{table} 
\begin{tabular}{c}
\centering 
\begin{tabular}{|r|rr|rr|rr|rr|} 
\hline
& \multicolumn{2}{c|}{$m=3$}& \multicolumn{2}{c|}{$m=9$} & \multicolumn{2}{c|}{$m=15$} & \multicolumn{2}{c|}{simulation}\\ 
\hline 
$x$ & Regular & Coxian & Regular & Coxian & Regular & Coxian & mean (95\% CI) & time \\
\hline
1&4.787254& 4.78722 & 4.79092& 4.79083 & 4.79009 & 4.79062 &   4.88319 (4.83846, 4.92793) & 252.25\\
2&6.83693& 6.83707 & 6.87422& 6.87474 & 6.87394& 6.87425 &  7.01037 (6.95899, 7.06174)& 335.78\\
3&8.43963& 8.43984 & 8.49315& 8.49401 & 8.49325& 8.49341 &   8.67669 (8.63073,  8.72265) & 393.36\\
4&9.73000 & 9.73025 & 9.78990 & 9.79088 & 9.79014 & 9.79024 &  9.92484 (9.87496, 9.97473) & 417.80\\
\hline
\end{tabular}   
\end{tabular} \\
(i) Normal$(0,1)$ \\
\begin{tabular}{c}
\centering 
\begin{tabular}{|r|rr|rr|rr|rr|} 
\hline
& \multicolumn{2}{c|}{$m=3$}& \multicolumn{2}{c|}{$m=9$} & \multicolumn{2}{c|}{$m=15$} & \multicolumn{2}{c|}{simulation}\\ 
\hline 
$x$ & Regular & Coxian & Regular & Coxian & Regular & Coxian & mean (95\% CI) & time \\
\hline
1&2.65896& 2.65897 & 2.64269& 2.64253 & 2.64121 & 2.64209 &   2.63314 (2.60486, 2.66142) &  855.55\\
2&4.09177& 4.09180 & 4.11376& 4.11377 & 4.11381& 4.11379  &  4.08878 (4.05548, 4.12209)& 1210.17\\
3&5.34596& 5.34600 & 5.38611& 5.38618 & 5.38675& 5.38637 &  5.39638 (5.35693, 5.43583) & 1458.89\\
4&6.47014 & 6.47018 & 6.51839& 6.51850 & 6.51931 & 6.51877 & 6.53982 (6.49354, 6.58611) &  1606.43\\
\hline
\end{tabular}   
\end{tabular} \\
(ii) Weibull$(2,1)$ \\
\begin{tabular}{c}
\centering 
\begin{tabular}{|r|rr|rr|rr|rr|} 
\hline
& \multicolumn{2}{c|}{$m=3$}& \multicolumn{2}{c|}{$m=9$} & \multicolumn{2}{c|}{$m=15$} & \multicolumn{2}{c|}{simulation}\\ 
\hline 
$x$ & Regular & Coxian & Regular & Coxian & Regular & Coxian & mean (95\% CI) & time \\
\hline
1&0.59077& 0.59077 & 0.56206 & 0.56008 & 0.56221  & 0.56131 &   0.55884 (0.54929, 0.56840) & 59.27\\
2&1.06227& 1.06227  & 1.02191&  1.01910 & 1.02220 & 1.02073  &  1.02341 (1.00922, 1.03761)&  84.33\\
3&1.64378& 1.64378 & 1.59179& 1.58843 & 1.59212& 1.59036  &  1.59522 (1.57935, 1.61108)& 104.18\\
4&2.36403 & 2.36402 & 2.30339&  2.29962 & 2.30373 & 2.30177 & 2.30114 (2.28562, 2.28562) & 120.21\\
\hline
\end{tabular}   
\end{tabular} \\
(iii) Lognormal$(0,0.5)$ \\
\begin{tabular}{c}
\centering 
\begin{tabular}{|r|rr|rr|rr|rr|} 
\hline
& \multicolumn{2}{c|}{$m=3$}& \multicolumn{2}{c|}{$m=9$} & \multicolumn{2}{c|}{$m=15$} & \multicolumn{2}{c|}{simulation}\\ 
\hline 
$x$ & Regular & Coxian & Regular & Coxian & Regular & Coxian & mean (95\% CI) & time \\
\hline
1&1.27003 & 1.27005 & 1.18464& 1.18462  & 1.17009& 1.17010 &  1.18704 (1.16908, 1.20501)& 86.48\\
2&2.07961  & 2.07964  & 1.99519& 1.99519   & 1.98281 & 1.98285 &  1.98021 (1.95857, 2.00185) & 125.20\\
3&2.93024& 2.93027 & 2.84069& 2.84070 & 2.82708& 2.82712&  2.84949 (2.82471, 2.87427) & 153.69\\
4&3.83102 & 3.83105 & 3.73795& 3.73796 & 3.72385 & 3.72389 &  3.72817 (3.70147, 3.75487) & 171.65 \\
\hline
\end{tabular}   
\end{tabular} \\
(iv) Uniform$(0,2)$ \\
\caption{Computation of $\E^x \big[ \int_0^{\nu^a} e^{-q t} \diff L_t^a \big] $ via scale function and simulation for (b) $\sigma = 0$ and $\lambda = 5$.} \label{results_derivative_general_no_gaussian}
\end{table}

\subsection{Approximation of resolvent measures} In our second experiment, we shall evaluate the approximation of some functionals integrated with respect to the resolvent measure \eqref{eq_resolvent}.  Here we consider only for the case $\sigma > 0$ because we have observed above that existence/non-existence of the Brownian motion component does not have any noticeable impact in the approximation accuracy. 

Here we give three examples for $f$:
\begin{enumerate}
\item[(L)] \emph{linear function }: $f^{(lin)}(y) :=y$;
\item[(E)] \emph{capped exponential function}: $f^{(exp)}(y) := e^{y \wedge B}$ for some $B \geq 0$.
\item[(S)] \emph{simple function}: $f^{(sim)}(y) := \sum_{n \geq 1} f^{(n)} 1_{I_n} (y)$ for some constants $ f^{(n)}$  and subdivisions $I_n := (l_n, l_{n+1}]$ of $[0,\infty)$.
\end{enumerate}
In particular, we choose $B=1$ for (E) and  $I_1 = (0, 3)$, $I_2 = [3,\infty)$, $f^{(1)}=-10$ and  $f^{(2)}=10$ for (S).

We consider  \eqref{eq_resolvent} with $A = 0$, or the function $U(x) := \E^{x}\big[ \int_0^{\tau_{0}^-} e^{-qt} f(X_t) \diff t \big] =
W^{(q)} (x) \Psi_f (0)-
\Theta_f(x; 0)$.  Assuming the roots in $\mathcal{I}_q$ are distinct, straightforward integration gives
\begin{align*}
\Psi_{f^{(lin)}}(0)  &=   \frac 1 {\lapinv^2}, \\
\Psi_{f^{(exp)}}(0)
&= \frac {e^B} {\lapinv} e^{-\lapinv B} + \frac 1 {\Phi^{(1)}(q)} \Big[ 1- e^{-\Phi^{(1)}(q) B}  \Big], \\
\Psi_{f^{(sim)}}(0)  &= \frac 1 {\lapinv} \sum_{n \geq 1} f^{(n)} \Big[ e^{-\lapinv l_n} - e^{-\lapinv l_{n+1}}\Big], \\
\end{align*}
and
\begin{align*}
\Theta_{f^{(lin)}}(x;0) &=  \sum_{i \in \mathcal{I}_q} B_{i,q} [e^{\lapinv x} \gamma^{(-\Phi(q))}( 0, x) - e^{-\xi_{i,q}x}\gamma^{(\xi_{i,q})}( 0, x)], \\
 \Theta_{f^{(exp)}}(x;0)  
&= e^B w^{(0)}_{x}(B \wedge x, x) + e^x w^{(1)}_{x}(0, B \wedge x), \\
\Theta_{f^{(sim)}}(x;0) &= \sum_{n \geq 1} f^{(n)} w^{(0)}_{x}( l_n, l_{n+1}).
\end{align*}
Here we define, for any $0 \leq s < t$ and $k \geq 0$,
\begin{align*}
w_{x}^{(k)}(s, t) &:=  \frac 1 {\Phi^{(k)}(q) \psi'(\Phi(q))} \left( e^{\Phi^{(k)}(q)(x-s)_+} - e^{\Phi^{(k)}(q)(x-t)_+} \right) + \sum_{i  \in \mathcal{I}_q} B_{i,q} \Big[ \frac 1 {\xi_{i,q}^{(k)}} \Big( e^{-\xi_{i,q}^{(k)}(x-s)_+} - e^{-\xi_{i,q}^{(k)}(x-t)_+}  \Big) \Big],
\end{align*}
where $\Phi^{(k)}(q) := \Phi(q) - k$ and  $\xi^{(k)}_{i,q} := \xi_{i,q} + k$.
Also for  any $s < t$ and $a \in \R$, we define $\gamma^{(a)}(s, t) := \int_s^t y e^{a y} \diff y$.



We again evaluate the approximation by comparing them with simulated results.  In order to expedite the simulation, we use the parameter set (c) to decrease the value of $\tau_0^-$; the other parameters for the \lev processes are the same  as above.

Tables \ref{results_lin}, \ref{results_exp} and \ref{results_sim} show the results for (L), (E) and (S), respectively, along with the simulation time.   Overall, the results are accurate for $m=9,15$ while they are not for $m=3$.  As is expected, the approximation for the simple case is more difficult than the other two cases due to its discontinuity $-10 = f^{(sim)}(3-) < f^{(sim)}(3+) = 10$. The approximation improves sharply as we increase the value of $m$ and clearly there is a room for improvement by choosing a higher value of $m$.    We again did not observe any non-negligible difference between the regular and Coxian fit.

\subsection{Summary}  We have learned that the PH-fitting approach is a practical tool for approximating a scale function at least when the process to be fitted has a finite \lev measure.  While the computation time increases rapidly in the number of phases $m$,  a moderate value of $m$ (around $10$) attains a reasonably accurate fit.  Surprisingly, it still works even for the uniform case, which has been known to be hard to fit PH-distributions.  One important lesson we have learned here is that the Coxian fit is faster and as accurate as the regular fit.


\begin{table}[ht] 
\begin{tabular}{c}
\centering 
\begin{tabular}{|c|rr|rr|rr|cr|} 
\hline
& \multicolumn{2}{c|}{$m=3$}& \multicolumn{2}{c|}{$m=9$} & \multicolumn{2}{c|}{$m=15$} & \multicolumn{2}{c|}{simulation}\\ 
\hline 
$x$ & \multicolumn{1}{c}{Regular} & \multicolumn{1}{c|}{Coxian} & \multicolumn{1}{c}{Regular} & \multicolumn{1}{c|}{Coxian} & \multicolumn{1}{c}{Regular} & \multicolumn{1}{c|}{Coxian} & \multicolumn{1}{c}{mean (95\% CI)} & \multicolumn{1}{c|}{time} \\
\hline
1&0.95352& 0.95345 & 0.94042 &  0.94023  & 0.94035 & 0.94036   &    0.95411 (0.93881, 0.96942) & 564.70\\
2&2.00009& 2.00000  & 1.97938 & 1.97903 & 1.97917& 1.97922&   2.00829 (1.98601, 2.03058)&  936.51\\
3&3.35143& 3.35130 & 3.32382&3.32336  & 3.32355& 3.32361 &  3.39018 (3.36389, 3.41646)  & 1309.76\\
4&5.00098 & 5.00083 & 4.96663& 4.96605 & 4.96628 & 4.96636 &  5.02722 (4.98984, 5.06460) & 1681.43\\
\hline
\end{tabular}   
\end{tabular} \\
(i) Normal$(0,1)$ \\
\begin{tabular}{c}
\centering 
\begin{tabular}{|c|rr|rr|rr|cr|} 
\hline
& \multicolumn{2}{c|}{$m=3$}& \multicolumn{2}{c|}{$m=9$} & \multicolumn{2}{c|}{$m=15$} & \multicolumn{2}{c|}{simulation}\\ 
\hline 
$x$ & \multicolumn{1}{c}{Regular} & \multicolumn{1}{c|}{Coxian} & \multicolumn{1}{c}{Regular} & \multicolumn{1}{c|}{Coxian} & \multicolumn{1}{c}{Regular} & \multicolumn{1}{c|}{Coxian} & \multicolumn{1}{c}{mean (95\% CI)} & \multicolumn{1}{c|}{time} \\
\hline
1&0.61395& 0.61393 & 0.58704  & 0.58696   & 0.58636 & 0.58676 &   0.58344 (0.57629,  0.59058)& 535.35\\
2&1.35687 &  1.35684  & 1.31552  &   1.31540  & 1.31447& 1.31509   &  1.31059 (1.29711, 1.32407)& 904.16\\
3& 2.34319  &  2.34315  &  2.28703 & 2.28687   & 2.28558 & 2.28644 &  2.28431 ( 2.26614,  2.30248) & 1267.34\\
4&3.56952 & 3.56947 & 3.49896& 3.49875 & 3.49712 & 3.49821 & 3.50308 (3.48293, 3.52322) &  1642.37\\
\hline
\end{tabular}   
\end{tabular} \\
(ii) Weibull$(2,1)$ \\
\begin{tabular}{c}
\centering 
\begin{tabular}{|c|rr|rr|rr|cr|} 
\hline
& \multicolumn{2}{c|}{$m=3$}& \multicolumn{2}{c|}{$m=9$} & \multicolumn{2}{c|}{$m=15$} & \multicolumn{2}{c|}{simulation}\\ 
\hline 
$x$ & \multicolumn{1}{c}{Regular} & \multicolumn{1}{c|}{Coxian} & \multicolumn{1}{c}{Regular} & \multicolumn{1}{c|}{Coxian} & \multicolumn{1}{c}{Regular} & \multicolumn{1}{c|}{Coxian} & \multicolumn{1}{c}{mean (95\% CI)} & \multicolumn{1}{c|}{time} \\
\hline
1&0.37773  & 0.37773 & 0.36101 & 0.36061 & 0.36101  & 0.36088   &    0.36592 (0.36209,  0.36976) & 283.95\\
2&0.82879 & 0.82879  & 0.80543&  0.80433 & 0.80544 & 0.80497   &  0.80776 (0.80158, 0.81395)& 465.72\\
3&1.43259 & 1.43259 & 1.40166 &  1.40004 & 1.40174 & 1.40098   &  1.39661 (1.38566, 1.40756)  &  642.24\\
4&2.18726 & 2.18726 &  2.14890&   2.14676 &  2.14904 &  2.14800 &  2.14715 ( 2.13563,  2.15866) & 821.78\\
\hline
\end{tabular}   
\end{tabular} \\
(iii) Lognormal$(0,0.5)$ \\
\begin{tabular}{c}
\centering 
\begin{tabular}{|c|rr|rr|rr|cr|} 
\hline
& \multicolumn{2}{c|}{$m=3$}& \multicolumn{2}{c|}{$m=9$} & \multicolumn{2}{c|}{$m=15$} & \multicolumn{2}{c|}{simulation}\\ 
\hline 
$x$ & \multicolumn{1}{c}{Regular} & \multicolumn{1}{c|}{Coxian} & \multicolumn{1}{c}{Regular} & \multicolumn{1}{c|}{Coxian} & \multicolumn{1}{c}{Regular} & \multicolumn{1}{c|}{Coxian} & \multicolumn{1}{c}{mean (95\% CI)} & \multicolumn{1}{c|}{time} \\
\hline
1&0.51113 & 0.51115 & 0.47752& 0.47753   & 0.47295& 0.47299 &   0.46909 (0.46325,  0.47494)& 339.06\\
2&1.10011  & 1.10014  & 1.04852& 1.04854   & 1.04145 & 1.04150 &   1.04423 (1.03445, 1.05401)& 562.32\\
3&1.87954& 1.87958 & 1.80897& 1.80899 & 1.79887& 1.79893&  1.78543 (1.77387, 1.79699) & 780.25\\
4&2.84646 & 2.84650 & 2.75748& 2.75751 & 2.74478 & 2.74486 &   2.74978 (2.73299, 2.76656) &  1002.71\\
\hline
\end{tabular}   
\end{tabular} \\
(iv) Uniform$(0,2)$ \\
\caption{Computation for $f^{(lin)}$ via scale function and simulation.} \label{results_lin}
\end{table}

\begin{table}[ht]
\begin{tabular}{c}
\centering 
\begin{tabular}{|c|rr|rr|rr|cr|} 
\hline
& \multicolumn{2}{c|}{$m=3$}& \multicolumn{2}{c|}{$m=9$} & \multicolumn{2}{c|}{$m=15$} & \multicolumn{2}{c|}{simulation}\\ 
\hline 
$x$ & \multicolumn{1}{c}{Regular} & \multicolumn{1}{c|}{Coxian} & \multicolumn{1}{c}{Regular} & \multicolumn{1}{c|}{Coxian} & \multicolumn{1}{c}{Regular} & \multicolumn{1}{c|}{Coxian} & \multicolumn{1}{c}{mean (95\% CI)} & \multicolumn{1}{c|}{time} \\
\hline
1&3.91327& 3.91300 & 3.85866& 3.85787 & 3.85841 & 3.85843 &    3.99930 (3.93021, 4.06839) & 653.32\\
2&8.86274& 8.86231  &8.77410& 8.77262&  8.77316& 8.77343 &  8.97656 (8.88100,  9.07211)& 1081.60\\
3&15.35594&  15.35535 & 15.24146& 15.23950 & 15.23986&  15.24047 &  15.40826 (15.29742, 15.51910) & 1509.49\\
4&21.51320 & 21.51259 & 21.39477& 21.39263 & 21.39282 &  21.39357 &   21.59354 (21.46230, 21.72478) & 1923.83\\
\hline
\end{tabular}   
\end{tabular} \\
(i) Normal$(0,1)$ \\
\begin{tabular}{c}
\centering 
\begin{tabular}{|c|rr|rr|rr|cr|} 
\hline
& \multicolumn{2}{c|}{$m=3$}& \multicolumn{2}{c|}{$m=9$} & \multicolumn{2}{c|}{$m=15$} & \multicolumn{2}{c|}{simulation}\\ 
\hline 
$x$ & \multicolumn{1}{c}{Regular} & \multicolumn{1}{c|}{Coxian} & \multicolumn{1}{c}{Regular} & \multicolumn{1}{c|}{Coxian} & \multicolumn{1}{c}{Regular} & \multicolumn{1}{c|}{Coxian} & \multicolumn{1}{c}{mean (95\% CI)} & \multicolumn{1}{c|}{time} \\
\hline
1& 2.44576 & 2.44567  & 2.32883 & 2.32850 & 2.32600  & 2.32767    &    2.32949 ( 2.29418, 2.36479)& 596.42\\
2& 6.04544    & 6.04529   & 5.84960   & 5.84906  & 5.84476 & 5.84764   &  5.90037 (5.85258, 5.94816)& 1017.14\\
3&11.14758& 11.14741  &  10.89334 & 10.89257   & 10.88642   & 10.89054  &   10.86792 (10.79746, 10.93838)&  1421.04\\
4&16.05701 & 16.05684 &15.80446 & 15.80367 & 15.79719 & 15.80152 &  15.84273 (15.74270
, 15.94275) & 1841.28\\
\hline
\end{tabular}   
\end{tabular} \\
(ii) Weibull$(2,1)$ \\
\begin{tabular}{c}
\centering 
\begin{tabular}{|c|rr|rr|rr|cr|} 
\hline
& \multicolumn{2}{c|}{$m=3$}& \multicolumn{2}{c|}{$m=9$} & \multicolumn{2}{c|}{$m=15$} & \multicolumn{2}{c|}{simulation}\\ 
\hline 
$x$ & \multicolumn{1}{c}{Regular} & \multicolumn{1}{c|}{Coxian} & \multicolumn{1}{c}{Regular} & \multicolumn{1}{c|}{Coxian} & \multicolumn{1}{c}{Regular} & \multicolumn{1}{c|}{Coxian} & \multicolumn{1}{c}{mean (95\% CI)} & \multicolumn{1}{c|}{time} \\
\hline
1& 1.43699  & 1.43698 & 1.36129 & 1.35954& 1.36115  &  1.36063 &    1.34876 ( 1.32836, 1.36916) & 329.38\\
2&3.68388&  3.68387   &3.55855  &  3.55371 & 3.55839 & 3.55644  & 3.55338
 (3.52135, 3.58541)& 528.42\\
3&6.99755& 6.99755 & 6.85154& 6.84365   & 6.85203& 6.84845  &  6.86599 (6.82434,  6.90764) & 732.15\\
4&10.03517 & 10.03516 &  9.90912&   9.89892  & 9.91017 & 9.90498 & 9.93187 (9.87885, 9.98490) &  931.89\\
\hline
\end{tabular}   
\end{tabular} \\
(iii) Lognormal$(0,0.5)$ \\
\begin{tabular}{c}
\centering 
\begin{tabular}{|c|cc|cc|cc|cc|} 
\hline
& \multicolumn{2}{c|}{$m=3$}& \multicolumn{2}{c|}{$m=9$} & \multicolumn{2}{c|}{$m=15$} & \multicolumn{2}{c|}{simulation}\\ 
\hline 
$x$ & Regular & Coxian & Regular & Coxian & Regular & Coxian & mean (95\% CI) & time \\
\hline
1&2.01677 & 2.01686 & 1.87440& 1.87446  & 1.85517& 1.85533 &   1.87924 (1.84889, 1.90959)& 389.59\\
2&4.93011  & 4.93025  & 4.68672& 4.68683   & 4.65362  & 4.65387 &  4.67281 (4.62459, 4.72103)&  640.53\\
3&9.03815& 9.03830 & 8.69980& 8.69987 & 8.65039& 8.65063&  8.64427 (8.58724, 8.70129) & 896.49\\
4&12.83764 & 12.83778 & 12.49212 & 12.49218 & 12.44094 & 12.44114 & 12.46704 (12.39893, 12.53514) &  1144.27\\
\hline
\end{tabular}   
\end{tabular} \\
(iv) Uniform$(0,2)$ \\
\caption{Computation for $f^{(exp)}$ via scale function and simulation.} \label{results_exp}
\end{table}

\begin{table}[ht] 
\begin{tabular}{c}
\centering 
\begin{tabular}{|c|rr|rr|rr|cr|} 
\hline
& \multicolumn{2}{c|}{$m=3$}& \multicolumn{2}{c|}{$m=9$} & \multicolumn{2}{c|}{$m=15$} & \multicolumn{2}{c|}{simulation}\\ 
\hline 
$x$ & \multicolumn{1}{c}{Regular} & \multicolumn{1}{c|}{Coxian} & \multicolumn{1}{c}{Regular} & \multicolumn{1}{c|}{Coxian} & \multicolumn{1}{c}{Regular} & \multicolumn{1}{c|}{Coxian} & \multicolumn{1}{c}{mean (95\% CI)} & \multicolumn{1}{c|}{time} \\
\hline
1&-3.13582& -3.13563 & -3.11807 & -3.11781 & -3.11731 & -3.11789  &    -3.12326 (-3.14981, -3.09670) & 601.13\\
2&-3.38191& -3.38189 & -3.39678& -3.39687  & -3.39576&  -3.39653 &  -3.42146 (-3.46281, -3.38012)&  1004.57\\
3&-0.35763& -0.35785 & -0.41582& -0.41665 & -0.41551 & -0.41595  &  -0.41487 (-0.46512,  -0.36462) &  1396.95\\
4&2.91984 &  2.91947 &2.83341& 2.83191 & 2.83266 & 2.83272 &  2.85455 (2.78751, 2.92159) &  1812.60\\
\hline
\end{tabular}   
\end{tabular} \\
(i) Normal$(0,1)$ \\
\begin{tabular}{c}
\centering 
\begin{tabular}{|c|rr|rr|rr|cr|} 
\hline
& \multicolumn{2}{c|}{$m=3$}& \multicolumn{2}{c|}{$m=9$} & \multicolumn{2}{c|}{$m=15$} & \multicolumn{2}{c|}{simulation}\\ 
\hline 
$x$ & \multicolumn{1}{c}{Regular} & \multicolumn{1}{c|}{Coxian} & \multicolumn{1}{c}{Regular} & \multicolumn{1}{c|}{Coxian} & \multicolumn{1}{c}{Regular} & \multicolumn{1}{c|}{Coxian} & \multicolumn{1}{c}{mean (95\% CI)} & \multicolumn{1}{c|}{time} \\
\hline
1& -2.88841& -2.88841  & -2.84207   & -2.84179   & -2.83974  & -2.84112   &   -2.86276 (-2.88193, -2.84360) & 551.23\\
2& -3.63571& -3.63578  & -3.68150 & -3.68151 & -3.68150 & -3.68153    &   -3.69096 ( -3.72003, -3.66189)&  946.72\\
3&-1.49099 & -1.49114 & -1.65005  &  -1.65038  & -1.65290  & -1.65122   &  
  -1.66218 (-1.69899
, -1.62537) & 1363.74\\
4&1.09153 & 1.09137 & 0.89409 &  0.89351 & 0.88877  & 0.89192 & 0.84962
 (0.80292, 0.89633) &  1736.75\\
\hline
\end{tabular}    
\end{tabular} \\
(ii) Weibull$(2,1)$ \\
\begin{tabular}{c}
\centering 
\begin{tabular}{|c|rr|rr|rr|cr|} 
\hline
& \multicolumn{2}{c|}{$m=3$}& \multicolumn{2}{c|}{$m=9$} & \multicolumn{2}{c|}{$m=15$} & \multicolumn{2}{c|}{simulation}\\ 
\hline 
$x$ & \multicolumn{1}{c}{Regular} & \multicolumn{1}{c|}{Coxian} & \multicolumn{1}{c}{Regular} & \multicolumn{1}{c|}{Coxian} & \multicolumn{1}{c}{Regular} & \multicolumn{1}{c|}{Coxian} & \multicolumn{1}{c}{mean (95\% CI)} & \multicolumn{1}{c|}{time} \\
\hline
1&-2.17768& -2.17768 & -2.16212 & -2.16047    & -2.16294    & -2.16239  &    -2.16779 (-2.18408,  -2.15151) & 310.62\\
2&-2.78953& -2.78953  &  -2.84883&  -2.84639 & -2.85014   & -2.84832  &   -2.87129 (-2.88902, -2.85356)& 498.01\\
3&-1.21537& -1.21537 & -1.35601 &-1.35655 & -1.35748& -1.35669 &  -1.38980
 (-1.41319,  -1.36642) & 687.00\\
4&0.36719 & 0.36718 &  0.25258&  0.24716 &  0.25190 & 0.25038 & 0.21037 (0.18165, 0.23908) & 870.40\\
\hline
\end{tabular}   
\end{tabular} \\
(iii) Lognormal$(0,0.5)$ \\
\begin{tabular}{c}
\centering 
\begin{tabular}{|c|rr|rr|rr|cr|} 
\hline
& \multicolumn{2}{c|}{$m=3$}& \multicolumn{2}{c|}{$m=9$} & \multicolumn{2}{c|}{$m=15$} & \multicolumn{2}{c|}{simulation}\\ 
\hline 
$x$ & \multicolumn{1}{c}{Regular} & \multicolumn{1}{c|}{Coxian} & \multicolumn{1}{c}{Regular} & \multicolumn{1}{c|}{Coxian} & \multicolumn{1}{c}{Regular} & \multicolumn{1}{c|}{Coxian} & \multicolumn{1}{c}{mean (95\% CI)} & \multicolumn{1}{c|}{time} \\
\hline
1&-2.51479 & -2.51475 & -2.41060& -2.41049  & -2.39284 & -2.39284 &   -2.41128 (-2.42767, -2.39488)& 365.04\\
2&-3.05340  & -3.05330  & -3.06535& -3.06524   & -3.06739 & -3.06727 &  -3.11731 (-3.13792, -3.09670)& 606.77\\
3&-1.08817& -1.08801 & -1.24750& -1.24732 & -1.26687 &  -1.26658&  -1.29956 (-1.32655, -1.27258) & 849.17\\
4&0.90986 & 0.91001 & 0.65974& 0.65981 & 0.61977 & 0.61992 &   0.58704 (0.55266,  0.62143) & 1098.11\\
\hline
\end{tabular}   
\end{tabular} \\
(iv) Uniform$(0,2)$ \\
\caption{Computation for $f^{(sim)}$ via scale function and simulation.} \label{results_sim}
\end{table}

\section{Concluding Remarks}
We have studied the scale function for the spectrally negative PH \lev process and the PH-fitting approach for the approximation of the scale function for a general spectrally negative \lev process.  Because the fitted scale function is given as a function in a closed form, one can analytically obtain other fluctuation identities explicitly. Our numerical results based on the EM-algorithm of \cite{Asmussen_1996} suggest that the PH-fitting is a powerful approximation tool  at least when one is interested in obtaining it in an analytical form.

While our numerical results already exhibit reasonable accuracy of the PH-fitting of scale functions, there is still a room for improvement.  There exist a variety of fitting algorithms typically developed in queueing analysis.
Well-known examples are the moment-matching approach (e.g.\ MEFIT and MEDA) and the maximum-likelihood approach (e.g.\ MLAPH and EMPHT), and a thorough study of pros and cons of each fitting techniques has been conducted in, for example, \cite{Horvath_Telek, Lang_Arthur_1996}.  Our next step is, therefore, to apply these existing algorithms for the approximation of the scale function, and analyze their performance for a variety of \lev measures.

Finally, the PH-fitting construction of scale functions can also be achieved from empirical data as in, among others, \cite{AsmussenMadanPistorius07}. The closed-form expression of the approximated scale function can be used flexibly to identify the fluctuation of the process implied by the empirical data.


\appendix

\section{Fitted Data}

The fitted PH-distributions and parameters of the corresponding scale functions for $m = 3,9$ are given below.

\subsection{Fitted PH-distributions}  Let  $(m, \bm \alpha_m, \bm T_m)$ and $(m, [1,0,\ldots, 0], \bm T_m^c)$ be the fitted PH-distributions for the regular and Coxian fit, respectively.

(i) \underline{Normal$(0,1)$}:
\begin{align*}
&{\bm T}_3 = \left[ \begin{array}{rrr}   
   -2.8330  &  0.0000   & 0.0000 \\
    2.3613  & -2.7743 &   0.0000 \\
    0.1073  &  2.1292 &  -2.8454 
\end{array} \right], 
&{\bm T}_3^c = \left[ \begin{array}{rrr}   
   -2.8512    & 2.0459  &       0 \\
         0   &-2.7676 &   2.0926 \\
         0       &  0   &-2.8400 
\end{array} \right], \\ &{\bm \alpha}_3 = [ \begin{array}{rrr}  0.0924   &  0.0578 &   0.8497\end{array} ],   
\end{align*}
\begin{align*} 
&{\bm T}_9 = \left[ \begin{array}{rrrrrrrrr}   
   -3.7115 &   0.0000  &  0.0000   & 0.0000    &0.0002 &   0.0241    &3.4579   & 0.0000  &  0.0000 \\
    0.6513  & -4.5953  &  0.0974   & 0.0002    &0.6422   & 0.4061    &0.0319    &0.5389 &   2.0614 \\
    0.0534   & 0.5135  & -5.9112    &0.0310    &0.7069    &0.1432    &0.3790    &0.4071  &  0.2037 \\
    0.0050   & 0.7520  &  1.6122   &-4.7124   & 0.1050    &0.1331    &0.2998    &1.5375 &   0.1023 \\
    0.7898  &  0.0503  &  0.0549    &0.0000   &-4.8933    &0.7107    &1.0245    &0.0246 &   1.3825 \\
    2.3764   & 0.0001  &  0.0017    &0.0000    &0.0477   &-4.3859   & 0.9693    &0.0000 &   0.0258 \\
    0.0001   & 0.0000   & 0.0000    &0.0000   & 0.0000    &0.0000   &-3.7831 &   0.0000  &  0.0000 \\
    0.6489  &  0.6410  &  0.0196    &0.0002  &  0.3457    &0.3718  &  0.0226  & -4.1770 &   2.0345 \\
    3.2096  &  0.0006 &   0.0017    &0.0000&    0.0960    &0.6220 &   0.0870  &  0.0004 &  -4.0823
 \end{array} \right],  \\ &{\bm \alpha}_9 =    [ \begin{array}{rrrrrrrrr}   0.0251   & 0.0209  &  0.1089 &   0.7375&    0.0521 &   0.0219 &   0.0164  &  0.0147 &   0.0026 \end{array} ],
\end{align*}
\begin{align*} 
&{\bm T}^c_9 = \left[ \begin{array}{rrrrrrrrr}   
-4.0461   & 3.4027  &       0    &     0    &     0     &    0     &    0   &     0     &    0 \\
         0  & -4.3285   & 2.7480    &     0  &       0   &     0       &  0    &     0 &        0 \\
         0    &     0  & -3.5110  &  3.3597    &     0  &       0    &     0   &      0     &    0 \\
         0     &    0    &     0  & -3.5482  &  1.5160    &     0    &     0     &    0     &    0 \\
         0     &    0   &      0     &    0 &  -4.5039  &  1.9858  &       0    &     0    &     0 \\
         0    &     0    &     0   &      0    &     0  & -6.7093   & 0.5874   &      0    &     0 \\
         0      &   0     &    0    &     0    &     0    &     0  &-11.9250  &  0.2777     &    0 \\
         0       &  0      &   0    &     0    &     0    &     0   &      0  &-18.1680  &  1.0444 \\
         0      &   0   &      0    &     0   &      0    &     0    &     0     &    0  & -23.5959
 \end{array} \right].
\end{align*}
(ii) \underline{Weibull$(2,1)$}:
\begin{align*}
&{\bm T}_3 = \left[ \begin{array}{rrr}   
   -2.8330  &  0.0000   & 0.0000 \\
    2.3613  & -2.7743 &   0.0000 \\
    0.1073  &  2.1292 &  -2.8454 
\end{array} \right], 
&{\bm T}_3^c = \left[ \begin{array}{rrr}   
   -3.3439  &  3.3439   &      0 \\
         0 &  -3.3439 &   3.2273 \\
         0  &       0   &-3.3499 \\
\end{array} \right], \\ &{\bm \alpha}_3 = [ \begin{array}{rrr}  0.0000  &  0.0125 &   0.9875 \end{array} ],
\end{align*}
\begin{align*} 
&{\bm T}_9 = \left[ \begin{array}{rrrrrrrrr}   
   -6.3782    &0.0000  &  0.0000  &  0.0006  &  0.0000 &   0.0000  &  0.7112  &  5.6547  &  0.0000\\
    0.2809   &-6.3362 &   2.3813  &  3.2971 &   0.0082 &   0.0000  &  0.2241 &  0.1440  &  0.0000\\
    2.7488  &  0.0015  & -6.0787 &   3.1742 &   0.0000 &   0.0000 &   0.0484 &   0.0942  &  0.0000\\
    2.5080 &   0.0000 &   0.0001 &  -6.1893  &  0.0000 &   0.0000 &   0.0418  &  3.6395 &   0.0000\\
    0.0679 &   0.5571  &  3.8628 &   0.1807 &  -6.1770 &   0.0000&    1.4364 &   0.0485&    0.0236\\
    0.0388  &  0.2401 &   0.0104 &   0.0038&    1.5906  & -5.8760  &  0.1887 &   0.0356 &   3.7679\\
    0.0000  &  0.0000   & 0.0000&    0.0000 &   0.0000 &   0.0000  & -5.8447 &   0.0000 &   0.0000\\
    0.0001  &  0.0000  &  0.0000 &   0.0000  &  0.0000 &   0.0000 &  5.8605  & -5.8832   & 0.0000\\
    0.0275 &   2.5665 &   1.4770  &  0.0165  &  0.8888  &  0.0000  &  1.2733  &  0.0135  & -6.2632
 \end{array} \right],  \\ &{\bm \alpha}_9 =    [ \begin{array}{rrrrrrrrr}   0.0015  &  0.0031 &   0.0009 &   0.0134    &0.0052 &   0.9485   & 0.0000  &  0.0000 &   0.0274 \end{array} ], \\
&{\bm T}^c_9 = \left[ \begin{array}{rrrrrrrrr}   
   -5.8363 &   5.8363    &     0     &    0  &       0    &     0     &    0 &        0    &     0 \\
         0   &-5.8363    &5.6125   &      0  &       0    &     0   &      0    &    0       &  0 \\
         0    &     0  & -5.8863 &  4.5513      &   0     &    0     &    0   &      0   &     0 \\
         0    &     0   &      0  & -5.8776  &  5.6237    &     0     &    0     &    0   &      0 \\
         0      &   0      &   0  &       0  & -5.8454  &  4.7530    &     0  &       0     &    0 \\
         0     &    0   &      0      &   0    &     0 &  -6.0098  &  2.7380   &      0  &       0 \\
         0    &     0   &      0  &       0    &     0    &     0 &  -7.4417  &  0.1083   &      0 \\
         0   &      0   &      0     &    0    &     0   &      0   &      0 & -13.4635  &  0.1481 \\
         0     &    0 &        0    &     0 &        0   &      0  &       0  &       0 & -24.4108
 \end{array} \right].
\end{align*}
(iii) \underline{Lognormal$(0,0.5)$}:
\begin{align*}
&{\bm T}_3 = \left[ \begin{array}{rrr}   
   -2.6473 &   2.6473  &  0.0000 \\
    0.0000  & -2.6564  &  0.0000 \\
    2.6389  &  0.0000  & -2.6389
\end{array} \right], 
&{\bm T}_3^c = \left[ \begin{array}{rrr}   
   -2.6410   &  2.6410 &        0 \\
         0   &-2.6420    &2.6420 \\
         0     &    0   &-2.6595
\end{array} \right], \\ &{\bm \alpha}_3 = [\begin{array}{rrr}  0.0000 &   0.0000  &  1.0000\end{array} ],
\end{align*}
\begin{align*} 
&{\bm T}_9 = \left[ \begin{array}{rrrrrrrrr}   
    -8.8578  &  0.0286   & 0.0029   & 0.0888  &  0.0047  &  7.1335   & 0.0170  &  1.5686   & 0.0138\\
    0.0000 &  -7.5274  &  5.4631 &   0.0000  &  0.0000 &   0.0000 &   1.5104 &   0.0000  &  0.5539\\
    0.0001  &  0.0007 &  -6.6246 &   6.0864  &  0.0000  &  0.0000&    0.5362  &  0.0000  &  0.0012\\
    1.3151  &  0.0003 &   0.0009 &  -7.0777 &   0.0000  &  0.0210  &  0.0093  &  5.7310  &  0.0002\\
    0.0004 &  0.4254 &   0.0011  &  0.0004 &  -5.4666   & 0.0146 &   0.0012  &  0.0006 &   0.1851\\
    0.0004  &  0.0498  &  0.0002 &   0.0003 &   6.9732   &-7.0421 &   0.0011  &  0.0042 &   0.0130\\
    0.0067  &  0.0047 &   0.0750  &  2.8372  &  0.0000  &  0.0000  & -2.9367  &  0.0107 &   0.0025\\
    0.0767  &  0.0009  &  0.0001  &  0.0100  &  0.0001   &10.1447 &   0.0005 & -10.2336 &  0.0008\\
    0.0000 &   0.7020 &   3.1540  &  0.0000   & 0.0000   & 0.0000  &  0.7259   & 0.0000 &  -4.5819
 \end{array} \right],  \\ &{\bm \alpha}_9 =    [\begin{array}{rrrrrrrrr}   0.0000 &   0.2681  &  0.0000 &  0.0000     &    0      &   0   &0.0000 &   0.0000  &  0.7319 \end{array} ], \\
&{\bm T}^c_9 = \left[ \begin{array}{rrrrrrrrr}   
    -1.8290   & 1.8290   &      0   &      0   &      0  &      0    &     0   &      0     &    0 \\
         0   &-9.5931  &  9.5931    &     0    &     0    &     0    &     0   &      0   &      0 \\
         0     &    0  &-9.5931  &  9.5931    &     0    &     0    &     0    &     0     &    0 \\
         0   &      0    &     0   &-9.5931  &  9.5931   &      0     &    0   &      0    &     0 \\
         0   &      0    &     0    &     0   &-9.5932   & 9.5932    &     0   &      0   &      0 \\
         0   &      0    &     0    &     0    &     0  & -9.6481  &  9.2726   &      0   &      0 \\
         0   &      0     &    0    &     0    &     0   &      0  &-14.8141 &   0.5492   &      0 \\
         0   &      0    &     0   &      0    &     0  &       0    &     0  &-38.8187  &  0.3365 \\
         0   &      0    &     0   &      0    &     0   &      0   &      0   &      0  & -67.0421
 \end{array} \right].
\end{align*}
(iv) \underline{Uniform$(0,2)$}:
\begin{align*}
&{\bm T}_3 = \left[ \begin{array}{rrr}   
   -2.5729  &  2.5448&    0.0000\\
    0.0000  & -2.5645&    2.5644 \\
    0.0000   & 0.0000 &  -2.7844
\end{array} \right], 
&{\bm T}_3^c = \left[ \begin{array}{rrr}   
   -2.7978  &  2.3130   &      0 \\
         0   &-2.5639   & 2.5637 \\
         0     &    0  & -2.5640 
\end{array} \right], \\ &{\bm \alpha}_3 = [ \begin{array}{rrr}  0.8348 &   0.0000  &  0.1652\end{array} ],   
\end{align*}
\begin{align*} 
&{\bm T}_9 = \left[ \begin{array}{rrrrrrrrr}   
   -6.3869   & 0.0000 &   0.0000 &   6.3869  &  0.0000 &   0.0000   & 0.0000   & 0.0000 &   0.0000 \\
    6.2800  & -6.4310  &  0.0000 &   0.0000  &  0.0000  &  0.0000   & 0.0000   & 0.1510   & 0.0000 \\
    0.0000  &  5.5196  & -6.4878 &   0.0000 &   0.0022  &  0.0000   & 0.0000 &   0.1180  &  0.0000 \\
    0.0000  &  0.0000  &  0.0000 &  -6.3869  &  0.0000  &  0.0000  &  6.3869  &  0.0000  &  0.0000 \\
    0.0000   &      0    &     0  &  0.0000  & -6.9388  &  0.0000  &  0.0000   & 6.9388     &    0 \\
    0.0000   &      0     &    0  &  0.0000  &  6.9381 &  -6.9381  &  0.0000  &  0.0000     &    0 \\
    0.0000   & 0.0000  &  0.0000  &  0.0000 &   0.0000   & 6.3929  & -6.3929  &  0.0000    &     0 \\
         0     &    0     &    0    0.0000  &  0.0000  &  0.0000  &  0.0000  & -6.9766  &       0 \\
    0.0000 &  0.0000   & 5.4945 &   0.0000 &   0.0008 &   1.3999  &  0.0071   & 0.0000 &  -7.0566
 \end{array} \right],  \\ &{\bm \alpha}_9 =    [ \begin{array}{rrrrrrrrr}   0.0000  &  0.0000&   0.0000   & 0.0000    0.0000   & 0.0000  &  0.0004   & 0.0400 &   0.9596 \end{array} ], \\
&{\bm T}^c_9 = \left[ \begin{array}{rrrrrrrrr}   
   -7.0218   & 6.5900   &      0  &      0     &    0     &    0    &     0 &        0   &     0 \\
         0 & -7.0341  &  6.3763   &      0   &      0     &    0    &     0   &      0      &   0 \\
         0    &     0  & -6.9384  &  6.7546     &    0    &     0    &     0    &     0    &     0 \\
         0    &     0   &      0  & -6.9651  &  5.2304    &     0    &     0   &      0   &      0 \\
         0     &    0  &      0    &     0 &  -6.4125   & 6.4124   &      0    &     0     &    0 \\
         0     &    0   &      0   &      0   &      0   &-6.4125  &  6.4125   &      0   &      0 \\
         0     &    0   &     0    &     0   &      0   &      0  & -6.4125  &  6.4125   &      0 \\
         0    &     0  &       0   &      0  &       0   &      0    &     0  & -6.4125&    6.4125 \\
         0    &     0   &      0    &     0  &       0   &      0    &     0    &     0 &  -6.4125
 \end{array} \right].
\end{align*}

\subsection{The values of elements in $\mathcal{J}_q$}
%
\begin{center}
\begin{tabular}{|c|rr|rr|rr|rr|}
\hline
 & \multicolumn{2}{c|}{(i) Normal} & \multicolumn{2}{c|}{(ii) Weibull} & \multicolumn{2}{c|}{(iii) Lognormal} & \multicolumn{2}{c|}{(iv) Uniform} \\
\hline 
  &    \multicolumn{1}{c}{Regular} &  \multicolumn{1}{c|}{Coxian}&  \multicolumn{1}{c}{Regular} &  \multicolumn{1}{c|}{Coxian} &   \multicolumn{1}{c}{Regular} &  \multicolumn{1}{c|}{Coxian} &  \multicolumn{1}{c}{Regular} &  \multicolumn{1}{c|}{Coxian} \\
\hline
$\eta_{1,q}$   &   2.7743 & 2.7676 & 3.3432 & 3.3439 & 2.6389   & 2.6410&  2.5645 &2.5639  \\
$\eta_{2,q}$   &  2.8331 & 2.8400 & 3.3445 & 3.3439 &  2.6395 &2.6420 & 2.5729  &2.5640\\
$\eta_{3,q}$  & 2.8454 & 2.8512 &  3.3492& 3.3439  & 2.6642  &2.6595 & 2.7844& 2.7978\\
  \hline
\end{tabular} \\ 
$m =3$ \\ \vspace{0.2cm}
\begin{tabular}{|c|cc|cccc|cccc|cc|}
\hline
 & \multicolumn{2}{c|}{(i) Normal} & \multicolumn{4}{c|}{(ii) Weibull} & \multicolumn{4}{c|}{(iii) Lognormal} & \multicolumn{2}{c|}{(iv) Uniform} \\
\hline 
 &    \multicolumn{1}{c}{Regular} &  \multicolumn{1}{c|}{Coxian}&  \multicolumn{3}{c}{Regular} &  \multicolumn{1}{c|}{Coxian} &   \multicolumn{3}{c}{Regular} &  \multicolumn{1}{c|}{Coxian} &  \multicolumn{1}{c}{Regular} &  \multicolumn{1}{c|}{Coxian} \\
\hline
$\eta_{1,q}$   &   3.5296&  3.5110 &  5.8758 &+& 0.0000i &  5.8363 &   1.7725 &+& 0.0000i & 1.8290&   6.3869 & 6.4125\\
$\eta_{2,q}$   & 3.7845   & 3.5482 &  5.8447 &+& 0.0000i  &   5.8363 &  3.6157 &+ &0.0000i   &9.5931&6.3869  &6.4125\\
$\eta_{3,q}$  &  3.6990 & 4.0461  &  5.8821 &+& 0.0000i &   5.8454  &  4.5796 &+& 3.6631i  & 9.5931 &6.3929 &  6.4125\\
$\eta_{4,q}$   &  4.0039 & 4.3285& 6.0398 &+ &0.0000i &  5.8776&   4.5796 &-& 3.6631i &  9.5931&  6.4310 &  6.4125 \\
$\eta_{5,q}$   & 4.5280    & 4.5039 &  6.0774& + &0.0000i &  5.8863&  5.6786&+& 0.0000i& 9.5932 & 6.4878 & 6.4125\\
$\eta_{6,q}$  & 4.6833&6.7093  & 6.1836 &+ &0.0000i & 6.0098  & 9.3981 &+& 0.0000i  &9.6481 &  6.9379&  6.9384 \\
$\eta_{7,q}$   &  4.9074 & 11.9250 & 6.3679 &+ &0.0068i  & 7.4417 &  9.4126 &+ &3.5997i   & 14.8141& 6.9389  &  6.9651 \\
$\eta_{8,q}$   &  5.1110 & 18.1680 & 6.3679 &-& 0.0068i & 13.4635 & 9.4126& -& 3.5997i   &38.8187 &  6.9766& 7.0218\\
$\eta_{9,q}$  &  6.0054 &23.5959  & 6.3874 &+& 0.0000i  &  24.4108  & 11.8990& +& 0.0000i &67.0421 &7.0566& 7.0341 \\
  \hline
\end{tabular} 
$m = 9$
\end{center}
\subsection{The roots of $\mathcal{I}_q$ and $\Phi(q)$}

\underline{(a) $\lambda = 5$ and $\sigma = 1$:} \begin{center}
\begin{tabular}{|c|rcr|rcr|rcr|rcr|}
\hline
 & \multicolumn{6}{c|}{(i) Normal} & \multicolumn{6}{c|}{(ii) Weibull}\\
\hline 
 &    \multicolumn{3}{c|}{Regular} &  \multicolumn{3}{c|}{Coxian}&  \multicolumn{3}{c|}{Regular} &  \multicolumn{3}{c|}{Coxian}  \\
\hline
$\Phi(q)$   &  \multicolumn{3}{c|}{0.0443} &\multicolumn{3}{c|}{0.0443}&\multicolumn{3}{c|}{ 0.0651}& \multicolumn{3}{c|}{0.0651}\\
\hline
$\xi_{1,q}$   & 0.3197 &+& 0.0000i   &0.3197 &+& 0.0000i& 0.2276 &+& 0.0000i& 0.2276 &+& 0.0000i\\
$\xi_{2,q}$   & 3.5847 &+& 1.2168i &3.5874 &+& 1.2202i & 4.4438 &+& 1.8003i &  4.4441 &+& 1.8006i \\
$\xi_{3,q}$  &3.5847 &-& 1.2168i &3.5874& -& 1.2202i&  4.4438 &-& 1.8003i&  4.4441 &-& 1.8006i\\
$\xi_{4,q}$   &11.0080& + &0.0000i& 11.0086 &+& 0.0000i&10.9869 &+& 0.0000i&  10.9870 &+& 0.0000i\\
  \hline
\end{tabular} \\ 
$m =3$ \\ \vspace{0.2cm}
\begin{tabular}{|c|rcr|rcr|rcr|rcr|}
\hline
 & \multicolumn{6}{c|}{(i) Normal} & \multicolumn{6}{c|}{(ii) Weibull}\\
\hline 
 &    \multicolumn{3}{c|}{Regular} &  \multicolumn{3}{c|}{Coxian}&  \multicolumn{3}{c|}{Regular} &  \multicolumn{3}{c|}{Coxian}  \\
\hline
$\Phi(q)$   &  \multicolumn{3}{c|}{0.0443} &\multicolumn{3}{c|}{0.0443}&\multicolumn{3}{c|}{0.0656}& \multicolumn{3}{c|}{0.0656}\\
\hline
$\xi_{1,q}$   &  0.3248 &+& 0.0000i &0.3249 &+& 0.0000i& 0.2365 &+& 0.0000i& 0.2365 &+& 0.0000i\\
$\xi_{2,q}$   & 3.3380 &+& 2.2064i & 3.3099 &+& 2.2437i&3.7493 &+& 3.5397i& 3.7374 &+& 3.5516i\\
$\xi_{3,q}$  & 3.3380 &-& 2.2064i  &3.3099 &-& 2.2437i& 3.7493 &-& 3.5397i&3.7374 &-& 3.5516i\\
$\xi_{4,q}$   & 4.7176 &+& 0.0000i &5.7446 &+& 1.6149i&6.3675 &+& 0.0000i&  7.1628 &+& 3.9282i\\
$\xi_{5,q}$  &   5.0105 &+& 0.0354i  &5.7446 &-& 1.6149i & 7.2326 &+& 3.7784i&  7.1628 &-& 3.9282i\\
$\xi_{6,q}$  & 5.0105 &-& 0.0354i  & 7.3804 &+& 0.0000i&7.2326 &-& 3.7784i&  9.6617 &+& 2.1047i\\
$\xi_{7,q}$    &  5.5461 &+& 1.2267i & 10.8739 &+& 0.0000i& 7.2882 &+& 0.0000i& 9.6617 &-& 2.1047i\\
$\xi_{8,q}$   & 5.5461 &-& 1.2267i & 11.9282 &+& 0.0000l&9.1630 &+& 1.1757i&  11.4428 &+& 0.0000i\\
$\xi_{9,q}$    &  6.5805 &+& 0.0000i  &18.1680 &+& 0.0000i &9.1630 &-& 1.1757i& 13.4595 &+& 0.0000i\\
$\xi_{10,q}$  & 10.8843 &+& 0.0000i &23.5959 &+& 0.0000i &10.9100& + &0.0000i&  24.4108 &+& 0.0000i\\
  \hline
\end{tabular} \\ 
$m = 9$ \vspace{0.2cm}
\end{center}
\begin{center}
\begin{tabular}{|c|rcr|rcr|rcr|rcr|}
\hline
 & \multicolumn{6}{c|}{(iii) Lognormal} & \multicolumn{6}{c|}{(iv) Uniform}\\
\hline 
 &    \multicolumn{3}{c|}{Regular} &  \multicolumn{3}{c|}{Coxian}&  \multicolumn{3}{c|}{Regular} &  \multicolumn{3}{c|}{Coxian}  \\
\hline
$\Phi(q)$   &  \multicolumn{3}{c|}{0.2112} &\multicolumn{3}{c|}{0.2112}&\multicolumn{3}{c|}{0.1142}& \multicolumn{3}{c|}{0.1142}\\
\hline
$\xi_{1,q}$  &0.0537 &+& 0.0000i& 0.0537 &+& 0.0000i& 0.1049 &+& 0.0000i&0.1049 &+& 0.0000i\\
$\xi_{2,q}$  & 3.5742 &+& 1.4680i & 3.5741 &+& 1.4680i& 3.4677 &+& 1.3838i & 3.4695 &+& 1.3849i\\
$\xi_{3,q}$ & 3.5742 &-& 1.4680i&3.5741 &-& 1.4680i& 3.4677 &-& 1.3838i&3.4695 &-& 1.3849i\\
$\xi_{4,q}$  & 10.9518 &+& 0.0000i& 10.9518 &+& 0.0000i& 10.9957 &+& 0.0000i& 10.9960 &+& 0.0000i\\
  \hline
\end{tabular} \\ 
$m =3$ \\ \vspace{0.2cm}
\begin{tabular}{|c|rcr|rcr|rcr|rcr|}
\hline
 & \multicolumn{6}{c|}{(iii) Lognormal} & \multicolumn{6}{c|}{(iv) Uniform}\\
\hline 
 &    \multicolumn{3}{c|}{Regular} &  \multicolumn{3}{c|}{Coxian}&  \multicolumn{3}{c|}{Regular} &  \multicolumn{3}{c|}{Coxian}  \\
\hline
$\Phi(q)$   &  \multicolumn{3}{c|}{0.2171} &\multicolumn{3}{c|}{0.2174}&\multicolumn{3}{c|}{0.1175}& \multicolumn{3}{c|}{0.1175}\\
\hline
$\xi_{1,q}$   & 0.0542 &+& 0.0000i&0.0542 &+& 0.0000i&0.1087 &+& 0.0000i&0.1087 &+& 0.0000i\\
$\xi_{2,q}$  &  3.5710 &+& 0.0000i &5.0976 &+& 3.9959i& 2.6930 &+& 3.5210i&2.6923 &+& 3.5228i\\
$\xi_{3,q}$  & 4.1447 &+& 4.2681i &5.0976 &-& 3.9959i & 2.6930 &-& 3.5210i&2.6923 &-& 3.5228i\\
$\xi_{4,q}$  & 4.1447 &-& 4.2681i & 10.0067 &+& 5.3918i & 5.7295 &+& 5.0992i&5.7335 &+& 5.1012i\\
$\xi_{5,q}$   &   6.7264 &+& 0.0000i&10.0067 &-& 5.3918i&5.7295 &-& 5.0992i&5.7335 &-& 5.1012i \\
$\xi_{6,q}$  &  9.1911 &+& 4.2497i & 14.2179 &+& 3.4774i  &8.8548 &+& 4.6773i&8.8564 &+& 4.6816i\\
$\xi_{7,q}$  & 9.1911 &-& 4.2497i&14.2179 &-& 3.4774i& 8.8548 &-& 4.6773i&8.8564 &-& 4.6816i\\
$\xi_{8,q}$  & 9.4031 &+& 0.0000i&16.1826 &+& 0.0000i& 11.5388 &+& 2.7343i&11.5452 &+& 2.7338i\\
$\xi_{9,q}$  & 12.0697 &+& 1.3445i&38.8187 &+& 0.0000i& 11.5388 &-& 2.7343i  &11.5452 &-& 2.7338i\\
$\xi_{10,q}$  &  12.0697 &-& 1.3445i&67.0421 &+& 0.0000i&12.3722 &+& 0.0000i& 12.3758 &+& 0.0000i \\
  \hline
\end{tabular} \\ 
$m = 9$
\end{center}
\underline{(b) $\lambda = 5$ and $\sigma = 0$:} 
\begin{center}
\begin{tabular}{|c|rcr|rcr|rcr|rcr|}
\hline
 & \multicolumn{6}{c|}{(i) Normal} & \multicolumn{6}{c|}{(ii) Weibull}\\
\hline 
 &    \multicolumn{3}{c|}{Regular} &  \multicolumn{3}{c|}{Coxian}&  \multicolumn{3}{c|}{Regular} &  \multicolumn{3}{c|}{Coxian}  \\
\hline
$\Phi(q)$   &  \multicolumn{3}{c|}{0.0451} &\multicolumn{3}{c|}{0.0451}&\multicolumn{3}{c|}{0.0674}& \multicolumn{3}{c|}{0.0674 }\\
\hline
$\xi_{1,q}$   &    0.3570 &+& 0.0000i & 0.3571 &+& 0.0000i &  0.2544 &+& 0.0000i & 0.2544 &+& 0.0000i  \\
$\xi_{2,q}$   &   3.5654 &+& 1.0773i & 3.5685 &+& 1.0801i & 4.4200 &+& 1.5069i & 4.4203 &+& 1.5072i \\
$\xi_{3,q}$  & 3.5654 &-& 1.0773i & 3.5685 &-& 1.0801i  & 4.4200 &-& 1.5069i & 4.4203 &-& 1.5072i  \\
  \hline
\end{tabular} \\ 
$m =3$ \\ \vspace{0.2cm}
\begin{tabular}{|c|rcr|rcr|rcr|rcr|}
\hline
 & \multicolumn{6}{c|}{(i) Normal} & \multicolumn{6}{c|}{(ii) Weibull}\\
\hline 
 &    \multicolumn{3}{c|}{Regular} &  \multicolumn{3}{c|}{Coxian}&  \multicolumn{3}{c|}{Regular} &  \multicolumn{3}{c|}{Coxian}  \\
\hline
$\Phi(q)$   &  \multicolumn{3}{c|}{0.0452} &\multicolumn{3}{c|}{0.0452}&\multicolumn{3}{c|}{0.0680}& \multicolumn{3}{c|}{0.0680}\\
\hline
$\xi_{1,q}$   &    0.3638& +& 0.0000i & 0.3639 &+& 0.0000i &   0.2663 &+ &0.0000i & 0.2664 &+& 0.0000i  \\
$\xi_{2,q}$   & 3.4702 &+& 2.1218i   & 3.4460 &+& 2.1692i& 4.0425 &+& 3.5128i &  4.0302 &+& 3.5303i \\
$\xi_{3,q}$  & 3.4702 &-& 2.1218i &  3.4460 &-& 2.1692i  &4.0425 &-& 3.5128i  &  4.0302 &-& 3.5303i \\
$\xi_{4,q}$   & 4.7176 &+& 0.0000i    & 5.6793 &+& 1.3918i&  6.3675 &+& 0.0000i & 7.4092 &+& 3.4299i \\
$\xi_{5,q}$   &  5.0108 &+& 0.0336i &  5.6793 &-& 1.3918i & 7.2875 &+& 0.0000i &  7.4092 &-& 3.4299i\\
$\xi_{6,q}$  &  5.0108 &-& 0.0336i  & 7.0681 &+& 0.0000i   &  7.4140 &+& 3.2249i  &   9.3226 &+& 1.3680i \\
$\xi_{7,q}$   &  5.4445 &+& 1.0984i  &11.9246 &+& 0.0000i &   7.4140 &-& 3.2249i &  9.3226 &-& 1.3680i \\
$\xi_{8,q}$   & 5.4445 &-& 1.0984i& 18.1680 &+& 0.0000i  & 8.6251 &+& 0.9483i&  13.4645 &+& 0.0000i \\
$\xi_{9,q}$  & 6.3550 &+& 0.0000i&  23.5959 &+& 0.0000i  &  8.6251 &-& 0.9483i &  24.4107 &+& 0.0000i  \\
  \hline
\end{tabular} \\ 
$m = 9$ \\ \vspace{0.2cm}
\begin{tabular}{|c|rcr|rcr|rcr|rcr|}
\hline
 & \multicolumn{6}{c|}{(iii) Lognormal} & \multicolumn{6}{c|}{(iv) Uniform}\\
\hline 
 &    \multicolumn{3}{c|}{Regular} &  \multicolumn{3}{c|}{Coxian}&  \multicolumn{3}{c|}{Regular} &  \multicolumn{3}{c|}{Coxian}  \\
\hline
$\Phi(q)$   &  \multicolumn{3}{c|}{0.2359} &\multicolumn{3}{c|}{0.2359}&\multicolumn{3}{c|}{0.1227}& \multicolumn{3}{c|}{0.1226}\\
\hline
$\xi_{1,q}$  &0.0550 &+& 0.0000i& 0.0550 &+& 0.0000i& 0.1111 &+& 0.0000i&  0.1111 &+& 0.0000i\\
$\xi_{2,q}$   &3.5568 &+& 1.2867i & 3.5568 &+& 1.2867i&3.4617 &+& 1.2230i&  3.4636 &+& 1.2238i\\
$\xi_{3,q}$ & 3.5568 &-& 1.2867i & 3.5568 &-& 1.2867i& 3.4617 &-& 1.2230i&  3.4636 &-& 1.2238i\\
  \hline
\end{tabular} \\ 
$m =3$ \\
\begin{tabular}{|c|rcr|rcr|rcr|rcr|}
\hline
 & \multicolumn{6}{c|}{(iii) Lognormal} & \multicolumn{6}{c|}{(iv) Uniform}\\
\hline 
 &    \multicolumn{3}{c|}{Regular} &  \multicolumn{3}{c|}{Coxian}&  \multicolumn{3}{c|}{Regular} &  \multicolumn{3}{c|}{Coxian}  \\
\hline
$\Phi(q)$   &  \multicolumn{3}{c|}{0.2436} &\multicolumn{3}{c|}{0.2440}&\multicolumn{3}{c|}{0.1268}& \multicolumn{3}{c|}{0.1268}\\
\hline
$\xi_{1,q}$   & 0.0554 &+& 0.0000i& 0.0555 &+& 0.0000i&0.1157 &+& 0.0000i&   0.1157 &+& 0.0000i\\
$\xi_{2,q}$   &3.5749 &+& 0.0000i&5.8096 &+& 3.9763i&2.9052 &-& 3.6310i&  2.9047 &+& 3.6333i\\
$\xi_{3,q}$  &4.4309 &+& 4.2571i&5.8096 &-& 3.9763i &2.9052 &+& 3.6310i& 2.9047 &-& 3.6333i\\
$\xi_{4,q}$  & 4.4309 &-& 4.2571i&11.0587 &+& 4.6441i&6.1666 &+& 4.9424i& 6.1705 &+& 4.9436i\\
$\xi_{5,q}$   &6.4060 &+& 0.0000i&11.0587 &-& 4.6441i &6.1666 &-& 4.9424i& 6.1705 &-& 4.9436i\\
$\xi_{6,q}$  &9.4000 &+& 0.0000i&15.0528 &+& 1.6664i&9.2199 &+& 4.0919i& 9.2242 &+& 4.0959i\\
$\xi_{7,q}$  &9.5787 &+& 3.8753i&15.0528 &-& 1.6664i&9.2199 &-& 4.0919i&9.2242 &-& 4.0959i\\
$\xi_{8,q}$   &9.5787 &-& 3.8753i&38.8187 &+& 0.0000i&11.2067 &+& 1.5205i & 11.2121 &+& 1.5201i  \\
$\xi_{9,q}$  &12.1266 &+& 0.0000i&67.0421 &+& 0.0000i&11.2067 &-& 1.5205i& 11.2121 &-& 1.5201i\\
  \hline
\end{tabular} \\ 
$m = 9$
\end{center}

\underline{(c) $\lambda = 10$ and $\sigma = 1$:} \\ \\
\begin{center}
\begin{tabular}{|c|rcr|rcr|rcr|rcr|}
\hline
 & \multicolumn{6}{c|}{(i) Normal} & \multicolumn{6}{c|}{(ii) Weibull}\\
\hline 
 &    \multicolumn{3}{c|}{Regular} &  \multicolumn{3}{c|}{Coxian}&  \multicolumn{3}{c|}{Regular} &  \multicolumn{3}{c|}{Coxian}  \\
\hline
$\Phi(q)$   &  \multicolumn{3}{c|}{0.7581} &\multicolumn{3}{c|}{0.7581}&\multicolumn{3}{c|}{0.9783}& \multicolumn{3}{c|}{0.9784}\\
\hline
$\xi_{1,q}$   &0.0161 &+& 0.0000i&0.0161 &+& 0.0000i&  0.0127 &+& 0.0000i& 0.0127 &+& 0.0000i\\
$\xi_{2,q}$ &  3.6743 &+& 1.3946i &3.6769 &+& 1.3988i& 4.6032 &+& 2.0864i&4.6035 &+& 2.0868i\\
$\xi_{3,q}$  &  3.6743 &-& 1.3946i& 3.6769 &-& 1.3988i&  4.6032 &-& 2.0864i& 4.6035 &-& 2.0868i\\
$\xi_{4,q}$ & 11.8462 &+& 0.0000i&11.8471 &+& 0.0000i&11.7962 &+& 0.0000i& 11.7963 &+& 0.0000i\\
  \hline
\end{tabular} \\ 
$m =3$ \\ \vspace{0.2cm}
\begin{tabular}{|c|rcr|rcr|rcr|rcr|}
\hline
 & \multicolumn{6}{c|}{(i) Normal} & \multicolumn{6}{c|}{(ii) Weibull}\\
\hline 
 &    \multicolumn{3}{c|}{Regular} &  \multicolumn{3}{c|}{Coxian}&  \multicolumn{3}{c|}{Regular} &  \multicolumn{3}{c|}{Coxian}  \\
\hline
$\Phi(q)$   &  \multicolumn{3}{c|}{0.7626 } &\multicolumn{3}{c|}{0.7627}&\multicolumn{3}{c|}{1.0013}& \multicolumn{3}{c|}{1.0014}\\
\hline
$\xi_{1,q}$   & 0.0161 &+ &0.0000i& 0.0161 &+& 0.0000i&0.0127 &+& 0.0000i& 0.0127 &+& 0.0000i\\
$\xi_{2,q}$   & 3.2677 &+& 2.3936i&3.2316 &+& 2.4245i&3.6033 &+& 3.7915i& 3.5886 &+& 3.8007i\\
$\xi_{3,q}$  & 3.2677 &-& 2.3936i & 3.2316 &-& 2.4245i& 3.6033 &-& 3.7915i & 3.5886 &-& 3.8007i\\
$\xi_{4,q}$   & 4.7176 &+& 0.0000i&5.8504 &+& 1.7320i&6.3675 &+& 0.0000i&  7.2957 &+& 4.2056i\\
$\xi_{5,q}$   &5.0104 &+& 0.0362i &5.8504 &-& 1.7320i&7.2883 &+& 0.0000i& 7.2957 &-& 4.2056i\\
$\xi_{6,q}$  &5.0104 &-& 0.0362i&  7.5588 &+& 0.0000i & 7.3987 &+& 4.0582i &9.8984 &+& 2.1765i\\
$\xi_{7,q}$   & 5.6504 &+& 1.2766i & 11.6475 &+& 0.0000i&7.3987 &-& 4.0582i& 9.8984 &-& 2.1765i\\
$\xi_{8,q}$  & 5.6504 &-& 1.2766i& 11.9483 &+& 0.0000i&9.3094 &+& 1.1112i& 12.1683 &+& 0.0000i\\
$\xi_{9,q}$  & 6.7481 &+& 0.0000i& 18.1680 &+& 0.0000i& 9.3094 &-& 1.1112i&13.4518 &+& 0.0000i\\
$\xi_{10,q}$  & 11.6760 &+& 0.0000i& 23.5959 &+& 0.0000i&11.7366 &+& 0.0000i& 24.4108 &+& 0.0000i\\
  \hline
\end{tabular} \\ 
$m = 9$ \\ \vspace{0.2cm}
\begin{tabular}{|c|rcr|rcr|rcr|rcr|}
\hline
 & \multicolumn{6}{c|}{(iii) Lognormal} & \multicolumn{6}{c|}{(iv) Uniform}\\
\hline 
 &    \multicolumn{3}{c|}{Regular} &  \multicolumn{3}{c|}{Coxian}&  \multicolumn{3}{c|}{Regular} &  \multicolumn{3}{c|}{Coxian}  \\
\hline
$\Phi(q)$   &  \multicolumn{3}{c|}{1.2194} &\multicolumn{3}{c|}{1.2194}&\multicolumn{3}{c|}{1.0386}& \multicolumn{3}{c|}{1.0386}\\
\hline
$\xi_{1,q}$  &  0.0078 &+& 0.0000i& 0.0078 &+& 0.0000i&0.0098 &+& 0.0000i&0.0098 &+& 0.0000i\\
$\xi_{2,q}$  &3.7010 &+& 1.6981i& 3.7010 &+& 1.6981i & 3.5636 &+& 1.6005i & 3.5652 &+& 1.6019i\\
$\xi_{3,q}$ &3.7010 &-& 1.6981i&3.7010 &-& 1.6981i& 3.5636 &-& 1.6005i&3.5652 &-& 1.6019i\\
$\xi_{4,q}$   &11.7522 &+& 0.0000i&11.7522 &+& 0.0000i& 11.8235 &+& 0.0000i&11.8240 &+& 0.0000i\\
  \hline
\end{tabular} \\ 
$m =3$ \\ \vspace{0.2cm}
\begin{tabular}{|c|rcr|rcr|rcr|rcr|}
\hline
 & \multicolumn{6}{c|}{(iii) Lognormal} & \multicolumn{6}{c|}{(iv) Uniform}\\
\hline 
 &    \multicolumn{3}{c|}{Regular} &  \multicolumn{3}{c|}{Coxian}&  \multicolumn{3}{c|}{Regular} &  \multicolumn{3}{c|}{Coxian}  \\
\hline
$\Phi(q)$   &  \multicolumn{3}{c|}{1.2601} &\multicolumn{3}{c|}{1.2609}&\multicolumn{3}{c|}{1.0678}& \multicolumn{3}{c|}{1.0677}\\
\hline
$\xi_{1,q}$   &0.0078 &+& 0.0000i&0.0078 &+& 0.0000i&0.0098 &+& 0.0000i&   0.0098 &+& 0.0000i\\
$\xi_{2,q}$   &3.5668& +& 0.0000i &4.8256 &+& 4.4164i& 2.4537 &+& 3.6815i & 2.4526 &+& 3.6833i\\
$\xi_{3,q}$  & 3.9970 &+& 4.5161i &4.8256 &-& 4.4164i&2.4537 &-& 3.6815i&  2.4526 &-& 3.6833i\\
$\xi_{4,q}$  &3.9970 &-& 4.5161i& 10.1310 &+& 5.8687i&5.6968 &+& 5.4007i &5.7016 &+& 5.4029i\\
$\xi_{5,q}$   & 6.8666 &+ &0.0000i&10.1310 &-& 5.8687i&5.6968 &-& 5.4007i& 5.7016 &-& 5.4029i\\
$\xi_{6,q}$  & 9.2545 &+& 4.5559i&14.6715 &+& 3.8357i &9.0010 &+& 4.9244i &9.0018 &+& 4.9297i \\
$\xi_{7,q}$  & 9.2545 &-& 4.5559i& 14.6715 &-& 3.8357i&   9.0010 &-& 4.9244i&  9.0018 &-& 4.9297i\\
$\xi_{8,q}$  &  9.4033 &+& 0.0000i&16.6606 &+& 0.0000i &11.9574 &+& 2.8322i & 11.9641 &+& 2.8307i \\
$\xi_{9,q}$  &12.6305 &+& 1.5688i&38.8187 &+& 0.0000i &11.9574 &-& 2.8322i &11.9641 &-& 2.8307i \\
$\xi_{10,q}$  &12.6305 &-& 1.5688i& 67.0421 &+& 0.0000i&  12.8358 &+& 0.0000i&   12.8395 &+& 0.0000i\\
  \hline
\end{tabular} \\ 
$m = 9$
\end{center}

\bibliographystyle{abbrv}
\bibliography{PhaseTypebib}
\end{document}